\newcommand{\ignore}[1]{} 
\newtheorem{remark}{Remark}[section]
\newcommand{\mbb}[1]{\mathbb{#1}} 
\def\One{\mathchoice{\rm 1\mskip-4.2mu l}{\rm 1\mskip-4.2mu l}
{\rm 1\mskip-4.6mu l}{\rm 1\mskip-5.2mu l}}
\newcommand\Ind[1]{{\One_{\{#1\}}}}
\newcommand{\class}{\mbb{C}_{\gamma}}
\newcommand{\Pb}{{\mathsf{P}}}
\newcommand{\Eb}{{\mathsf{E}}}
\newcommand{\Qb}{{\mathsf{Q}}}
\newcommand{\ARL}{{\mathsf{ARL}}}
\newcommand{\ADD}{{\mathsf{ADD}}}
\newcommand{\SADD}{{\mathsf{SADD}}}
\newcommand{\STADD}{{\mathsf{STADD}}}
\newcommand{\mc}[1]{\mathcal{#1}} 
\newcommand{\Jc}{\mc{J}}
\newcommand{\Bc}{{ \mc{B}}}
\newcommand{\xra}{\xrightarrow} 
\newcommand{\abs}[1]{\left\vert#1\right\vert}
\newcommand{\set}[1]{\left\{#1\right\}}
\newcommand{\brc}[1]{\left(#1\right)}
\newcommand{\brcs}[1]{\left[#1\right]}
\renewcommand{\le}{\leqslant} 
\renewcommand{\ge}{\geqslant}
\title{Third-order Asymptotic Optimality of the Generalized Shiryaev--Roberts Changepoint Detection Procedures\thanks{This work was supported by the U.S.\ Army Research Office under MURI grant  W911NF-06-1-0044, by the U.S.\ Air Force Office of Scientific Research under MURI grant FA9550-10-1-0569, by the U.S.\ Defense Threat Reduction Agency under grant HDTRA1-10-1-0086, and by the U.S.\ National Science Foundation under grants CCF-0830419 and EFRI-1025043 at the University of Southern California, Department of Mathematics. The work of Moshe Pollak was also supported by a grant from the Israel Science Foundation and the Marcy Bogen Chair of Statistics at the Hebrew University of Jerusalem, Israel. }}
\author{Alexander\ G.\ Tartakovsky\thanks{Department of Mathematics and
Center for Applied Mathematical Sciences, University of Southern California,
3620 S. Vermont Ave., KAP 108, Los Angeles, CA 90089-2532,
United States of America ({\tt tartakov@usc.edu}).}
        \and Moshe\ Pollak\thanks{Department of Statistics,
Hebrew University of Jerusalem,
Mount Scopus, Jerusalem 91905, Israel ({\tt msmp@mscc.huji.ac.il}).}
        \and Aleksey\ S.\ Polunchenko\thanks{Department of Mathematics and
Center for Applied Mathematical Sciences, University of Southern California,
3620 S. Vermont Ave., KAP 108, Los Angeles, CA 90089-2532,
United States of America ({\tt polunche@usc.edu}).}}
\begin{document}

\maketitle

\begin{center}

\date{February  2011}

\end{center}

\begin{abstract}
Several variations of the Shiryaev--Roberts detection procedure in the context of the simple changepoint problem are considered: starting the procedure at $R_0=0$ (the original Shiryaev--Roberts procedure), at $R_0=r$ for fixed $r>0$, and at $R_0$ that has the quasi-stationary distribution. Comparisons of operating characteristics are made. The differences fade as the average run length to false alarm tends to infinity. It is shown that the Shiryaev--Roberts procedures that start either from a specially designed point $r$ or from the random ``quasi-stationary'' point are third-order asymptotically optimal.
\end{abstract}

\begin{keywords}
Sequential analysis, sequential changepoint detection, Shiryaev--Roberts procedure
\end{keywords}

%
%
%
\begin{AMS}
62L10, 62L15, 60G40
\end{AMS}

\pagestyle{myheadings}
\thispagestyle{plain}
\markboth{A. G. TARTAKOVSKY, M. POLLAK AND A. S. POLUNCHENKO}{OPTIMALITY OF THE GENERALIZED SHIRYAEV-ROBERTS PROCEDURES}

\section{Introduction}\label{s:intro}

The simple changepoint problem posits that one obtains a series of observations $X_1,X_2,\dots$ such that $\{X_i\}_{i\ge 1}$ are independent and, for some value $\nu$, $\nu\ge0$ (the changepoint), $X_1,X_2,\dots,X_{\nu}$ have known density $f$ and $X_{\nu+1}, X_{\nu+2},\dots$ have known density $g$ ($\nu=\infty$ means that all observations have  density $f$ and $\nu=0$ means that all observations have density $g$). The changepoint $\nu$ is unknown, and the sequence $X=\{X_i\}_{i\ge 1}$ is being monitored for detecting a change. A sequential detection policy is defined by a stopping time $T$ (with respect to the $X$'s), so that after observing $X_1,X_2,\dots,X_T$ it is declared that apparently a change is in effect.

By $\Pb_\nu$ we denote the probability measure generated by the observations $X$ when the changepoint is $\nu$ and $\Eb_\nu$  stands for the corresponding expectation. The notation $\nu=\infty$, $\Pb_\infty$ and $\Eb_\infty$ correspond to the no-change scenario. In other words, under $\Pb_\infty$ the observations $\{X_i\}_{i \ge 1}$ are i.i.d. with density $f$ and under $\Pb_0$ the observations $\{X_i\}_{i \ge 1}$ are i.i.d. with density $g$ (both with respect to a dominating measure $\lambda$).

Common operating characteristics of a detection policy $T$ are $\Eb_\infty T$, the average run length (expected time) to false alarm (assuming there is no change), and $\sup_{0\le \nu <\infty} \Eb_\nu(T-\nu|T>\nu)$, the maximal expected delay to detection. Subject to a lower bound $\gamma$ on $\Eb_\infty T$, the goal is to minimize the maximum expected delay. Note that a uniformly optimal procedure that minimizes the expected detection delay $\Eb_\nu(T-\nu|T>\nu)$ for all $\nu\ge0$ over stopping times with $\Eb_\infty T\ge\gamma$ does not exist, and we have to resort to the minimax setting.

In 1961, for the problem of detecting a change in the drift of a Brownian motion, Shiryaev introduced a detection procedure that is a limit of the Bayes procedure when the parameter of the exponential prior distribution tends to zero; see Shiryaev~\cite{Shiryaev:SMD61, Shiryaev:TPA63}. In particular, certain optimality properties of this procedure were established by Shiryaev~\cite{Shiryaev:TPA63}. In discrete time, a similar procedure was first considered by Roberts~\cite{Roberts:T66} as a particular case of the  Girschick-Rubin~\cite{Girschick+Rubin:AMS52} Bayesian procedure by setting the parameter of the geometric prior distribution to zero. Therefore, this procedure is usually referred to as the Shiryaev--Roberts procedure. The Shiryaev--Roberts (SR) procedure and its modifications are the centerpiece of this paper.

Specifically, let $\Lambda_i=g(X_i)/f(X_i)$ denote the likelihood ratio for the observation $X_i$, and define
\begin{align}
R_n&=\sum_{k=1}^n \prod_{i=k}^n \Lambda_i \label{SRstat} ,
\\
T_A&=\inf\{n\ge 1\colon R_n \ge A\}, \label{SRst}
\end{align}
where $A$ is a positive threshold that controls the false alarm rate. For a connection between $A$ and the expected time to false alarm $\Eb_\infty T_A$ see Pollak~\cite{Pollak:AS87}. When defining stopping times we always assume that $\inf\{\varnothing\}=\infty$, i.e., $T_A=\infty$ if $R_n$ never reaches level $A$. Note the recursion
\begin{equation} \label{SRstatrec}
R_{n+1} = (1+R_{n}) \Lambda_{n+1}, \quad n \ge 0, \quad R_0=0
\end{equation}
(with the null initial condition).

Pollak~\cite{Pollak:AS85} tweaked the procedure by starting it off at a random $R_0^{\Qb_A}$ whose distribution is the quasi-stationary distribution $\Qb_A$ of the SR statistic $R_n$, defined by
\begin{equation} \label{QS}
\Qb_A(x) = \lim_{n\to\infty} \Pb_\infty (R_n \le x| T_A> n),
\end{equation}
and showed that the stopping time
\begin{equation} \label{SRPst}
T_A^{\Qb_A} = \inf\{n \ge 1\colon R_n^{\Qb_A} \ge A\},
\end{equation}
where
\begin{equation} \label{SRPstat}
R_{n+1}^{\Qb_A} = (1+R_{n}^{\Qb_A}) \Lambda_{n+1}, \quad n \ge 0, \quad R_0^{\Qb_A}\sim \Qb_A,
\end{equation}
minimizes the maximal expected delay $\sup_{\nu \ge 0} \Eb_\nu (T-\nu|T> \nu)$ asymptotically as $\gamma\to \infty$ to within $o(1)$ over all stopping times that satisfy $\Eb_\infty T \ge \gamma$, where $A$ is such that $\Eb_\infty T_A^{\Qb_A}=\gamma$. We will refer to this randomized SR procedure as the Shiryaev--Roberts--Pollak (SRP) procedure.

Usually, $\Qb_A(x)$ cannot be expressed in a closed form (except in some rare cases). To compute $\Qb_A(x)$ and make the SRP procedure implementable, Moustakides~et~al.~\cite{Moustakidesetal-SS09} proposed a numerical framework.

Until recently the question whether the SRP procedure is exactly optimal (in the class of procedures with $\Eb_\infty T \ge \gamma$) was an open question. Moustakides~et~al.~\cite{Moustakidesetal-SS09} present numerical evidence that there exist procedures that are uniformly better. They regard starting off the original SR procedure at a fixed (but specially designed) $R_0^r=r$, $0 \le r < A$ and defining the stopping time with this new deterministic initialization
\begin{equation}\label{SR-rst}
T_{A}^r = \inf\{n \ge 1\colon R_n^r \ge A\} , \quad A >0,
\end{equation}
where
\begin{equation} \label{SR-rstat}
R_{n+1}^r = (1+R_{n}^r) \Lambda_{n+1}, \quad n \ge 0, \quad R_0^r=r .
\end{equation}
They show by numerical examples that, for certain values of $r$, apparently $\Eb_\nu(T_{A_{*}}^r -\nu | T_{A_{*}}^r > \nu) < \Eb_\nu(T_A^{\Qb_A} -\nu | T_A^{\Qb_A} > \nu)$ for all $\nu \ge 0$, where $A_{*}$ and $A$ are such that $\Eb_\infty T_A^{\Qb_A} = \Eb_\infty T_{A_{*}}^r$ (although the maximal expected delay is only slightly smaller for $T_{A_{*}}^r$). We will refer to the procedure defined in \eqref{SR-rst} and \eqref{SR-rstat} as the SR--$r$ procedure. In~\cite{Moustakidesetal-SS09}, it is conjectured that the SR--$r$ procedure with a specially designed $r=r(\gamma)$ is third-order asymptotically optimal (i.e., to within $o(1)$) in the class of procedures with $\Eb_\infty T \ge \gamma$ as $\gamma \to \infty$. Examples where the SR--$r$ procedure is strictly minimax are provided by Polunchenko~and~Tartakovsky~\cite{Polunchenko+Tartakovsky:AS2010} and Tartakovsky~and~Polunchenko~\cite{Tartakovsky+Polunchenko:IWAP2010}.

Shiryaev~\cite{Shiryaev:SMD61, Shiryaev:TPA63} showed for Brownian motion that if a change takes place after many successive applications (re-runs) of a stopping time $T$ (to a sequence $X_1,X_2,\dots$, starting anew after each false alarm), then the expected delay is minimized asymptotically as $\nu \to\infty$ (i.e., in a stationary mode) over all multi-cyclic procedures with $\Eb_\infty T \ge \gamma$  for every $\gamma >1$ by the original (multi-cyclic) SR procedure. Pollak~and~Tartakovsky~\cite{Pollak+Tartakovsky:SS09} showed the same for discrete time.

The goal of the present paper is to answer questions regarding comparisons between the various SR-type procedures introduced above -- the SR, SR--$r$, and SRP procedures. Is the stationary expected delay of the repeated SR procedure described in the previous paragraph similar to $\lim_{\nu\to\infty} \Eb_\nu(T_A -\nu | T_A >\nu)$? (Yes, see Theorem~\ref{Th2}, Theorem~\ref{Th3} and Corollary~\ref{Cor1}.) What can be said about the maximal expected detection delays of these detection procedures? (The SRP procedure and the SR--$r$ procedure with a specially designed $r$ are third-order asymptotically minimax, i.e., to within a negligible term $o(1) \to 0$. See Theorem~\ref{Th4}. This answer justifies the conjecture of Moustakides~et~al.~\cite{Moustakidesetal-SS09}.) What can be said about $\lim_{\nu\to\infty} \Eb_\nu(T_A -\nu | T_A >\nu)$, $\lim_{\nu\to\infty} \Eb_\nu(T_A^r -\nu | T_A^r >\nu)$, and $\lim_{\nu\to\infty} \Eb_\nu(T_A^{\Qb_A} -\nu | T_A^{\Qb_A} >\nu)$ when all have the same average run length to false alarm $\gamma$? (The average delay to detection at infinity is the smallest for the original SR procedure $T_A$, but the difference between them is $o(1)$ as $\gamma \to \infty$. See Theorems~\ref{Th5} and~\ref{Th4}.) \ We conclude with a numerical example that illustrates these phenomena.

\section{Preliminaries and Heuristics}\label{s:Prelim}

Recall that $\nu$ denotes the unknown changepoint which is identified with the last time instant under the nominal regime.
Thus, conditional on $\nu=k$, the joint density of the vector $(X_1,\dots,X_n)$ can be written as
\begin{equation}\label{pdf}
p(X_1,\dots, X_n|\nu=k)=\prod_{i=1}^{k}f(X_i) \prod_{i=k+1}^n g(X_i)
\end{equation}
for any $n\ge 1$ and $k\ge0$ provided that $\prod_{i=k+1}^n g(X_i)=1$ whenever $k\ge n$.

Given observations $X_1,\dots, X_n$, introduce the hypotheses $H_k: \nu=k < n$ that the change occurs somewhere within this stretch of observations and $H_\infty: \nu = \infty$ that there is no change. Clearly, the latter hypothesis is equivalent to the hypothesis $H_\nu$, $\nu \ge n$. According to \eqref{pdf}, the likelihood ratio of these hypotheses is
\[
\frac{p(X_1,\dots, X_n|H_k)}{p(X_1,\dots, X_n|H_\infty)} = \prod_{i=k+1}^n \Lambda_i,
\]
where $\Lambda_i=g(X_i)/f(X_i)$. Therefore, the SR statistic~\eqref{SRstat} can be interpreted as the average likelihood ratio averaged over a uniform improper prior distribution of the changepoint.

By $T$ we denote a generic stopping time (or a detection procedure) and by
$\class= \{T\colon \Eb_\infty T \ge \gamma\}$  the class of detection procedures
(stopping times) for which the average run length (ARL) to false alarm does not fall below
a given number $\gamma>1$.

The following two objects will be of the main interest in this paper: {\em Supremum Average Delay to Detection}
\[
\SADD(T)=\sup_{0\le\nu<\infty}\Eb_\nu(T-\nu|T>\nu)
\]
and the limiting value of the average detection delay which we will refer to as {\em Average Delay to Detection at Infinity}
\[
\ADD_\infty(T)=\lim_{\nu\to\infty}\Eb_\nu(T-\nu|T>\nu).
\]

As we mentioned in the introduction, we are interested in a minimax setting of minimizing the maximal expected delay $\SADD(T)$ over stopping times with the lower bound on the ARL to false alarm $\Eb_\infty T\ge\gamma$, i.e., in finding a procedure that would minimize $\SADD(T)$ in the class $\class$: $\inf_{T\in\class}\SADD(T) \mapsto T_{opt}$. However, in general we are unable to find an exact solution to this problem for every $\gamma >1$ and, for this reason, we focus on asymptotic solutions for a large ARL to false alarm $\gamma$; see Polunchenko~and~Tartakovsky~\cite{Polunchenko+Tartakovsky:AS2010} and Tartakovsky~and~Polunchenko~\cite{Tartakovsky+Polunchenko:IWAP2010} for examples where an exact minimax solution is available.

\begin{definition}
We call the procedure $T_o\in\class$ {\bf\em first-order asymptotically optimal} if
\[
\lim_{\gamma \to \infty} \frac{\SADD(T_o)}{\inf_{T\in\class}\SADD(T)} =1,
\]
i.e., $\inf_{T\in\class}\SADD(T) = \SADD(T_o)(1+o(1))$  where $o(1)\to0$ as $\gamma \to \infty$.

We call the procedure $T_o\in \class$ {\bf \em second-order asymptotically optimal} if
\[
\inf_{T\in\class}\SADD(T) = \SADD(T_o)+ O(1) \quad \text{as $\gamma\to\infty$},
\]
where $O(1)$ is bounded as $\gamma\to\infty$.

We call the procedure $T_o\in \class$ {\bf\em third-order asymptotically optimal} if
\[
\inf_{T\in\class}\SADD(T) = \SADD(T_o)+ o(1) \quad \text{as $\gamma\to\infty$},
\]
where $o(1)$ tends to zero as $\gamma\to\infty$.
\end{definition}

It follows from Pollak~\cite{Pollak:AS85} that the SRP procedure \eqref{SRPst} is third-order asymptotically optimal whenever $\Eb_0\abs{\log\Lambda_1}<\infty$. In Section~\ref{ss:ADD} we prove the third-order asymptotic optimality property under the stronger second moment condition
$\Eb_0\abs{\log\Lambda_1}^2<\infty$ using  different techniques. The second moment condition allows us to obtain higher-order asymptotic approximations for $\SADD(T_A^{\Qb_A})$ and $\inf_{T\in\class}\SADD(T)$ (up to a vanishing term). Since the SRP procedure is an equalizer, i.e., $\Eb_\nu(T_A^{\Qb_A}-\nu|T_A^{\Qb_A}>\nu)$ does not depend on $\nu$, it is sufficient to evaluate the average run length to detection $\Eb_0 T_A^{\Qb_A}$ assuming that the change is in effect from the very beginning.

More importantly, using the ideas of Moustakides~et~al.~\cite{Moustakidesetal-SS09}, we are able to design the initialization point $r$ in the SR--$r$ procedure \eqref{SR-rst}, which may or may not depend on the false alarm constraint $\gamma$, so that this procedure is also third-order asymptotically optimal. In this respect, the average delay to detection at infinity $\ADD_\infty(T_A^r)$ plays a critical role. To understand why, let us look at Figure~\ref{fig:Fig1} which shows the average detection delay $\Eb_\nu(T_A^r-\nu|T_A^r>\nu)$ versus $\nu$ for several initialization values $r$.  This figure was obtained using integral equations and numerical techniques of Moustakides~et~al.~\cite{Moustakidesetal-SS09}. For $r=0$, this is the classical SR procedure whose average detection delay is monotonically decreasing to its minimum that is attained at infinity (a steady state value). It is seen that there exists a value $r=r^*$ that generally may depend on the threshold $A$ for which the worst point $\nu$ is at infinity, i.e., $\SADD(T_A^{r^*})=\ADD_\infty(T_A^{r^*})$. This is a very important observation, since it allows us to build a proof of asymptotic optimality based on an estimate of $\ADD_\infty(T_A^{r})$. Particular choices of the ``head start" $r^*$ will be discussed in the following sections.

\begin{figure}[!htb]
 \centering
 \includegraphics[width=0.7\textwidth]{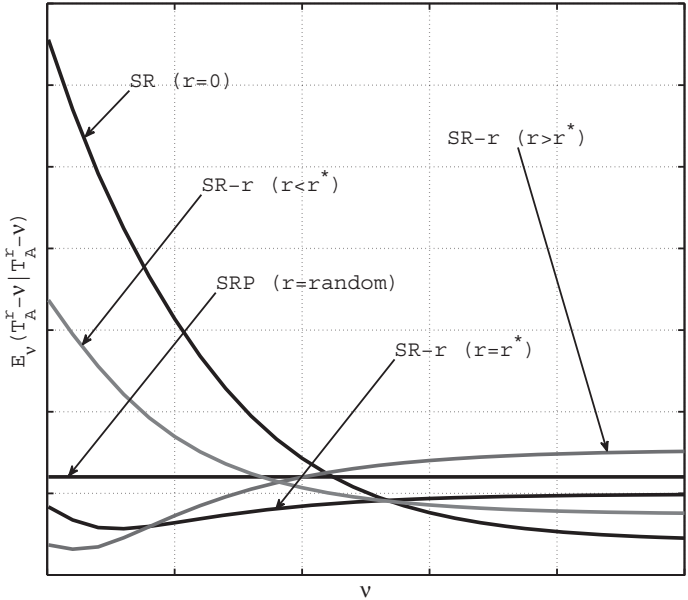}
 \caption{Typical behavior of the expected detection delay as a
 function of changepoint $\nu$ for various initialization strategies.}
 \label{fig:Fig1}
\end{figure}

The monotonicity of the curve for the average detection delay of the SR procedure allows us also to conclude (intuitively only since this is only a numerical observation and there is no theoretical justification of monotonicity) that the asymptotic lower bound for $\inf_{T\in\class}\SADD(T)$ can be evaluated based on the value of $\ADD_\infty(T_A)$. Asymptotically $\Eb_0 T_A^{\Qb_A}$, $\ADD_\infty(T_A^{r_A^{*}})$, and $\ADD_\infty(T_A)$ are the same since the mean of the quasi-stationary distribution is of order $O(\log A)$ and the values of the head start $r^*_A$ that lead to the almost optimal performance are either fixed (i.e., $\lim\limits_{A\to\infty}r_A^{*}= r_\infty^*$)  or go to infinity in such a way that $r^*_A/A \to 0$ as $A\to\infty$, as we will see from the following study.

\section{Asymptotic Performance of the SR--$r$ and SRP Procedures}\label{s:AsPerf}

In this section, we discuss the asymptotic behavior of the SR--$r$ and SRP detection procedures for large values of the  threshold $A$ and the ARL to false alarm $\gamma$.

\subsection{Average Run Length to False Alarm}\label{ss:ARLFA}

Let $Z_i=\log \Lambda_i$ denote the log-likelihood ratio for the $i$-th observation and let $S_n= Z_1+\cdots+Z_n$.
Introduce a one-sided stopping time
\[
\tau_a=\inf\{n\ge 1\colon S_n \ge a\}, \quad a >0.
\]
Let $\kappa_a = S_{\tau_a} - a$ be an overshoot (excess over the level $a$ at stopping), and let
\begin{equation} \label{AveOvershoot}
\zeta =\lim_{a\to\infty} \Eb_0 [ e^{-\kappa_a}], \quad
\varkappa =\lim_{a\to\infty} \Eb_0 \kappa_a .
\end{equation}
The constants $\zeta$ and $\varkappa$ depend on the model and can be computed numerically. In general, $0<\zeta<1$ and $\varkappa>0$.

\begin{theorem} \label{Th1}
Assume that $r=r^*$ where $r^*$ is either fixed or, more generally, $r^*=r_A^* \to \infty$ in such a way that $r_A^*/A \to 0$ as $A\to\infty$. Then for the SR--$r$ procedure, uniformly in $0 \le r \le r_A^*$,
\begin{equation}\label{ARLrA}
\Eb_\infty T_A^r = (A/\zeta) (1+o(1)) \quad \text{as $A \to \infty$},
\end{equation}
where the constant $\zeta$ is defined in~\eqref{AveOvershoot}.

For the SRP procedure
\begin{equation}\label{ARLSRP}
\Eb_\infty T_A^{\Qb_A}=(A/\zeta)(1+o(1)) \quad \text{as $A\to\infty$}.
\end{equation}
\end{theorem}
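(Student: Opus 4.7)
My plan is to build the proof on the $\Pb_\infty$-martingale structure of the SR statistic.

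\textbf{Step 1 (master identity via optional sampling).} Since $\{\Lambda_n\}$ are i.i.d.\ under $\Pb_\infty$ with $\Eb_\infty\Lambda_n = 1$, the recursion $R_{n+1}^r = (1+R_n^r)\Lambda_{n+1}$ gives $\Eb_\infty[R_{n+1}^r\mid\Fc_n] = 1 + R_n^r$, so $M_n^r := R_n^r - n$ is a $(\Pb_\infty,\Fc_n)$-martingale with $M_0^r = r$. I would apply optional sampling at $T_A^r$ after checking (i) $\Eb_\infty T_A^r<\infty$, via the monotonicity $R_n^r\ge R_n^0$ and the classical finiteness in Pollak~\cite{Pollak:AS87}, and (ii) uniform integrability of $\{M_{n\wedge T_A^r}^r\}$, via the envelope $R_{n\wedge T_A^r}^r\le(1+A)\Lambda_{T_A^r\wedge n}$ together with $\Eb_\infty\Lambda_1=1$. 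This yields the master identity
\begin{equation}\label{plan:id}
\Eb_\infty T_A^r = \Eb_\infty R_{T_A^r}^r - r.
\end{equation}

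\textbf{Step 2 (overshoot via nonlinear renewal theory).} Since $r\le r_A^* = o(A)$, \eqref{plan:id} reduces the theorem to establishing $\Eb_\infty R_{T_A^r}^r/A \to 1/\zeta$ uniformly in $r\in[0,r_A^*]$. Expanding the recursion produces the closed form $R_n^r = e^{S_n}\bigl(\sum_{k=1}^n e^{-S_{k-1}} + r\bigr)$, and under the change of measure $d\Pb_0/d\Pb_\infty\big|_{\Fc_n} = e^{S_n}$ the sum $\sum_k e^{-S_{k-1}}$ converges $\Pb_0$-almost surely. Consequently $\log R_n^r = S_n + \xi_n^r$ with $\xi_n^r$ slowly changing, placing the first crossing of $\log A$ by $\log R_n^r$ in the standard nonlinear-renewal framework of Woodroofe and Siegmund (cf.\ the argument of Pollak~\cite{Pollak:AS87}). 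The log-overshoot $\chi_A^r = \log(R_{T_A^r}^r/A)$ then inherits the limiting distribution of the random-walk overshoot $\kappa_a$ of $S_n$ above $a$ under $\Pb_0$, and a routine likelihood-ratio calculation combined with the definition $\zeta = \lim_a\Eb_0\,e^{-\kappa_a}$ from \eqref{AveOvershoot} yields $\Eb_\infty R_{T_A^r}^r/A \to 1/\zeta$.

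\textbf{Step 3 (uniformity and the SRP case).} The monotone coupling $R_n^r = R_n^0 + r\prod_{i=1}^n\Lambda_i$ shows $T_A^r\le T_A^0$, giving the uniform upper bound $\Eb_\infty T_A^r\le\Eb_\infty T_A^0=(A/\zeta)(1+o(1))$; the matching lower bound uses \eqref{plan:id} together with a uniform-in-$r$ tightness of $\{\chi_A^r\}$, which holds because $\xi_n^r$ depends on $r$ only through the additive term $\log(1 + r/\sum_k e^{-S_{k-1}})$ whose contribution to the overshoot is $o(1)$ once $r_A^*/A\to 0$. For the SRP procedure, applying the same martingale argument conditional on $R_0^{\Qb_A}$ gives
\[
\Eb_\infty T_A^{\Qb_A} = \Eb_\infty R_{T_A^{\Qb_A}}^{\Qb_A} - \Eb\,R_0^{\Qb_A},
\]
and because the quasi-stationary mean satisfies $\Eb\,R_0^{\Qb_A} = O(\log A) = o(A)$ (a known property; see Pollak~\cite{Pollak:AS85} and Moustakides~et~al.~\cite{Moustakidesetal-SS09}), integrating the conclusion of Step~2 over $\Qb_A$ delivers \eqref{ARLSRP}. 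The main obstacle I foresee is upgrading the pointwise-in-$r$ convergence in Step~2 to convergence uniform in $r\in[0,r_A^*]$ when $r_A^*\to\infty$: this requires a careful tightness argument for the overshoot family $\{\chi_A^r\}$ that leans crucially on the sublinearity $r_A^* = o(A)$ to keep the initial-condition perturbation asymptotically negligible.
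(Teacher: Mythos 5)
Your Step 1 identity $\Eb_\infty T_A^r=\Eb_\infty R^r_{T_A^r}-r$ is correct (the paper itself records it right after Theorem~\ref{Th1} as the basis of approximation \eqref{ARLapproxSR}), and your monotone-coupling upper bound $T_A^r\le T_A^0$ is exactly the paper's upper bound. The gap is in Step 2, which is where all of the analytic content sits. Nonlinear renewal theory after the change of measure describes the crossing of $\log A$ by $\log R_n^r$ under $\Pb_0$, where crossing is typical; what you need is the expectation $\Eb_\infty\bigl[R^r_{T_A^r}/A\bigr]=\Eb_\infty\bigl[e^{\chi_A^r}\bigr]$ under $\Pb_\infty$, where crossing is a rare event. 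Passing from the $\Pb_0$-overshoot limit to $\Eb_\infty e^{\chi_A^r}\to1/\zeta$ is not a ``routine likelihood-ratio calculation'': the Radon--Nikodym derivative on $\Fc_{T_A^r}$ is $e^{S_{T_A^r}}$, a functional of the whole path, and converting distributional convergence of the overshoot into convergence of this expectation requires a uniform-integrability argument; that is precisely the substance of Pollak~\cite{Pollak:AS87}, and it is available there only for $r=0$ (and the quasi-stationary start), not uniformly in $r\in[0,r_A^*]$ with $r_A^*\to\infty$ --- the very point you flag as your unresolved ``main obstacle.'' Also, the claim that the head-start perturbation contributes $o(1)$ to the overshoot is not right as stated: $\log\bigl(1+r+\sum_k e^{-S_{k-1}}\bigr)-\log\bigl(1+\sum_k e^{-S_{k-1}}\bigr)$ is of order $\log r_A^*\to\infty$; what matters is that the perturbation is slowly changing, and that has to be argued under the correct measure. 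As written, Step 2 asserts the hardest part of the theorem rather than proving it.

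The paper avoids any new overshoot analysis altogether: it takes $\Eb_\infty T_A=(A/\zeta)(1+o(1))$ from Pollak~\cite{Pollak:AS87} as the $r=0$ baseline and sandwiches the general case. The upper bound is your monotonicity argument. For the lower bound it sets $M=\inf\{n\colon re^{S_n}\ge m\}$ and uses that $e^{S_n}$ is a nonnegative mean-one $\Pb_\infty$-martingale, so $\Pb_\infty(M<\infty)<r/m$; on $\{T_A^r<M\}$ the head start has contributed less than $m$, hence the zero-start statistic satisfies $R_{T_A^r}\ge A-m$ and $T_{A-m}\le T_A^r$, which gives $\Eb_\infty T_A^r\ge\Eb_\infty T_{A-m}\,(1-r/m)$; choosing $m=m_A$ with $r_A^*/m_A\to0$ and $m_A/A\to0$ yields the uniform lower bound. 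For the SRP case the same argument is applied conditionally on $R_0^{\Qb_A}=x$ and integrated, giving $\Pb_\infty(M<\infty)\le\mu_A/m_A=O(m_A^{-1}\log A)$ by \eqref{meanQSupper}. Note that your plan of ``integrating the conclusion of Step 2 over $\Qb_A$'' has an additional hole: $\Qb_A$ is supported on all of $[0,A)$, while your Step 2 uniformity only covers $r\le r_A^*=o(A)$, so the tail mass $\Qb_A([r_A^*,A))$ must be controlled separately (e.g., by Markov's inequality via $\mu_A=O(\log A)$). If you want to complete your route, the cleanest repair is exactly the paper's reduction-to-$r=0$ device.
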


The proof of this theorem is given in the Appendix.

Let $R_\infty$ denote a random variable that asymptotically as $n\to\infty$ has the same $\Pb_\infty$-distribution as $R_n^r$, i.e.,
\[
\Pb(R_\infty \le x) = \lim_{n\to\infty}\Pb_\infty(R_n^r \le x) := \Qb_{\mathrm{st}}(x),
\]
where $\Qb_{\mathrm{st}}(x)$ is called the stationary distribution of $R_n^r$. Recall also that  we denote by $\Qb_A(x) = \lim_{n\to\infty} \Pb_\infty (R_n^r \le x| T_A^r> n)$
the quasi-stationary distribution of $R_n^r$.

We always assume that both quasi-stationary $\Qb_A(x)$ and stationary $\Qb_{\mathrm{st}}(x)$ distributions exist, which is always true when $\Lambda_1$ is continuous.

Note that the process $\{R_n^r-n-r\}_{n \ge 0}$ is a zero mean $\Pb_\infty$-martingale
and, hence, applying the  optional sampling theorem yields $\Eb_\infty T_A^r =
\Eb_\infty R_{T_A^r} -r$, which can be used to approximate $\Eb_\infty T_A^r$.
Using the above fact along with Theorem~\ref{Th1}, for practical purposes we suggest the following approximations
\begin{equation}\label{ARLapproxSR}
\Eb_\infty T_A^{r} \approx A/ \zeta -r
\end{equation}
and
\begin{equation}\label{ARLapproxSRP}
\Eb_\infty T_A^{\Qb_A} \approx A/ \zeta -\mu_A ,
\end{equation}
where $\mu_A=\int_0^A x\,d\Qb_A(x)$ is the mean of the quasi-stationary distribution.

Note that the mean of the quasi-stationary distribution is of order $O(\log A)$ as $A\to\infty$. Indeed, by Kesten~\cite[Theorem~5]{Kesten:AM73},
\begin{equation}\label{Stas}
\lim_{n\to\infty} \Pb_\infty (R_n > x)= 1- \Qb_{\mathrm{st}}(x) \sim 1/x \quad \text{as $x \to \infty$},
\end{equation}
which along with the fact that $\Qb_A(x) \ge \Qb_{\mathrm{st}}(x)$ (cf.~Pollak~and~Siegmund~\cite{Pollak+Siegmund:JAP86}) yields
\begin{equation}\label{meanQSupper}
\mu_A = \int_0^A [1-\Qb_A(x)]\,dx \le \int_0^A [1-\Qb_{\mathrm{st}}(x)]\,dx = O(\log A).
\end{equation}
Since $\Qb_A(x) \to \Qb_{\mathrm{st}}(x)$ as $A\to\infty$, it follows that $\mu_A= \log A +C_A$, where $C_A=O(1)$ as $A\to\infty$, so that
\begin{equation}\label{meanQS}
\mu_A = \log A + O(1) \quad \text{as}~~ A \to \infty.
\end{equation}

\subsection{Average Delay to Detection and Asymptotic Optimality} \label{ss:ADD}

We continue with obtaining asymptotic approximations (as $A\to\infty$) for the average delay to detection
$\Eb_\nu( T_A^r-\nu | T_A^r >\nu)$, including the case of the large changepoint $\nu$, i.e., for $\ADD_\infty(T_A^r)$,
as well as with deriving an asymptotic lower bound for $\inf_{T\in\class}\SADD(T)$. This will allow us to ascertain whether the SR--$r$ procedure with a certain initialization $r$ (which is either fixed or may depend on $\gamma$) is third-order asymptotically optimal as $\gamma\to\infty$.

Recall that $Z_i=\log\Lambda_i$ is the log-likelihood ratio for the observation $X_i$, $S_n=\sum_{i=1}^n Z_i$ and  $\varkappa$ is the limiting average overshoot in the one-sided test $\tau_a=\min\{n\ge1\colon S_n \ge a\}$ defined in~\eqref{AveOvershoot}. Let $S_n^j = \sum_{i=j}^n Z_i$ and let $V_{\nu,\infty} = \sum_{n=\nu+1}^{\infty} e^{-S_n^{\nu+1}} $. Let
\[
I=\Eb_0 Z_1 = \int \log \brc{\frac{g(x)}{f(x)}} g(x) \lambda(dx)
\]
denote the Kullback--Leibler information number. 

\begin{lemma}\label{Lem1} 
Let $\Eb_0 |Z_1|^2<\infty$ and assume that $Z_1$ is non-arithmetic. Let $0<N_A<A$ be such that  $N_A/(A^{1-\delta} \log A) \to \infty$ and $N_A=o(A/\log A)$ as $A \to \infty$ for some $\delta\in (0,1)$. Let $r \ge 0$ and let $T_A^r$ be defined as in~\eqref{SR-rst}. Then, as $A\to\infty$,
\begin{equation}\label{AAAnu}
\begin{aligned}
\Eb_\nu (T_A^r-\nu | T_A^r >\nu, R_\nu^r)&= \frac{1}{I}\Bigg\{\log A + \varkappa - \log(1+R_\nu^r)
\\
 & \quad - \Eb_\nu\brcs{\log\brc{1+ \frac{V_{\nu, \infty}}{1+R_\nu^r}} \Bigg | T_A^r >\nu, R_\nu^r} \Bigg\} + o(1) ,
\end{aligned}
\end{equation}
where $o(1) \to 0$ as $A\to \infty$ uniformly on $\{N_A \le \nu < \infty, R_\nu^{r}< A/N_A, 0 \le r < \infty\}$.
\end{lemma}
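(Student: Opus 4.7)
\textbf{Proof plan for Lemma \ref{Lem1}.}

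The first step is to derive a useful representation of $R_n^r$ for $n>\nu$. Iterating the recursion $R_{k+1}^r=(1+R_k^r)\Lambda_{k+1}$ from $k=\nu$ and factoring out $\exp(S_n^{\nu+1})$ gives, for every $n\ge\nu+1$,
\begin{equation*}
R_n^r = e^{S_n^{\nu+1}}\brcs{(1+R_\nu^r) + V_{\nu,n-1}},
\qquad
V_{\nu,m}:=\sum_{j=\nu+1}^m e^{-S_j^{\nu+1}},
\end{equation*}
with the convention $V_{\nu,\nu}=0$. Consequently, on the event $\set{T_A^r>\nu}$,
\begin{equation*}
T_A^r-\nu = \inf\brcs{n\ge 1 : S_n^{\nu+1} \ge \log A - \log(1+R_\nu^r) - \log\brc{1+\frac{V_{\nu,\nu+n-1}}{1+R_\nu^r}}}.
\end{equation*}
Under $\Pb_\nu$ the sequence $\set{Z_{\nu+j}}_{j\ge 1}$ is i.i.d.\ with common mean $I>0$ and is independent of $\sigma(X_1,\dots,X_\nu)$, so it is independent of the pair $(R_\nu^r,\Ind{T_A^r>\nu})$. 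Hence, conditionally on $R_\nu^r$ and $\set{T_A^r>\nu}$, the distribution of $T_A^r-\nu$ is that of the nonlinear stopping time on the right-hand side with $R_\nu^r$ treated as a constant.

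Next, set $a = \log A - \log(1+R_\nu^r)$ and $\xi_n = \log\brc{1 + V_{\nu,\nu+n-1}/(1+R_\nu^r)}$, so that the problem reduces to analyzing the boundary-crossing time of $S_n^{\nu+1} + \xi_n \ge a$. Since $V_{\nu,\nu+n-1}\uparrow V_{\nu,\infty}$ a.s.\ and the remainder $V_{\nu,\infty}-V_{\nu,\nu+n-1}=\sum_{j\ge\nu+n}e^{-S_j^{\nu+1}}$ decays geometrically fast because $S_j^{\nu+1}$ has positive drift, the perturbation $\xi_n$ is \emph{slowly changing} in the sense of Woodroofe and converges a.s.\ and in $L^1$ to $\xi_\infty=\log\brc{1+V_{\nu,\infty}/(1+R_\nu^r)}$. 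Applying the nonlinear renewal theorem (Woodroofe; see also Siegmund) under the non-arithmetic assumption and the second-moment condition $\Eb_0|Z_1|^2<\infty$, one obtains
\begin{equation*}
\Eb_\nu[T_A^r-\nu\mid T_A^r>\nu,R_\nu^r,V_{\nu,\infty}] = \frac{a + \varkappa - \xi_\infty}{I} + o(1) \quad \text{as } a\to\infty,
\end{equation*}
where $\varkappa$ is the limiting mean overshoot defined in \eqref{AveOvershoot}. Taking conditional expectation in $V_{\nu,\infty}$ (permitted by the $\Pb_\nu$-independence noted above) yields \eqref{AAAnu}.

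The remaining, and main, technical issue is the uniformity of the $o(1)$ term over the region $\set{N_A\le\nu<\infty,\ R_\nu^r<A/N_A,\ r\ge 0}$. Two ingredients are used: (i) on this region one has $a \ge \log A - \log(1+A/N_A) \ge \log N_A - O(1)\to\infty$, which gives a uniform tail estimate for the boundary-crossing time and validates Woodroofe's asymptotic expansion uniformly in $a$; (ii) the condition $\Eb_0 Z_1^2<\infty$, together with geometric decay of $V_{\nu,\infty}-V_{\nu,\nu+n-1}$, secures uniform integrability of the relevant overshoots and of $\xi_n$, so that the $L^1$-passage to $\xi_\infty$ is uniform as well. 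Together with the fact that the post-change random walk $S_n^{\nu+1}$ has a distribution that does not depend on $\nu$, these estimates deliver uniformity in $\nu$ and $R_\nu^r$. Verifying the slowly-changing condition with the required uniform rate is the most delicate step; everything else is bookkeeping around the representation derived in the first step.
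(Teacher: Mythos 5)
Your overall route is the same as the paper's: iterate the recursion to get $R_{\nu+n}^r = e^{S_{\nu+n}^{\nu+1}}\brc{1+R_\nu^r+V_{\nu,n}}$, rewrite $T_A^r-\nu$ on $\{T_A^r>\nu\}$ as the crossing time of $S_n^{\nu+1}$ plus the slowly changing perturbation $\log\brc{1+V_{\nu,n}/(1+R_\nu^r)}$ over the boundary $\log\brc{A/(1+R_\nu^r)}$, and invoke Woodroofe's nonlinear renewal theorem. However, there is a genuine gap: the one condition of that theorem which does not hold ``for free'' here is precisely the one you wave at and do not verify, namely that for some $\varepsilon>0$, $(\log A)\,\Pb_\nu\brc{T_A^r-\nu\le \varepsilon I^{-1}\log A \mid T_A^r>\nu, R_\nu^r}\to 0$ uniformly on $\{N_A\le\nu<\infty,\ R_\nu^r<A/N_A,\ r\ge 0\}$ (the paper's condition~\eqref{2nd1}); the required rate is $o(1/\log A)$, not merely a ``uniform tail estimate,'' because the expansion of the expectation has a leading term of order $\log A$. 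Establishing this is the bulk of the paper's proof: one changes measure $\Pb_\infty\mapsto\Pb_\nu$ via the likelihood ratio $e^{-S_{T_A^r}^{\nu+1}}$, bounds the resulting $\Pb_\infty$-probability of early crossing by Doob's submartingale inequality applied to $R_{\nu+n}^r$ (this is where the restriction $R_\nu^r<A/N_A$ and the growth condition on $N_A$ enter, giving a bound of order $A^{-\varepsilon^2}(\log A + R_\nu^r)=o(1/\log A)$), and controls the complementary term $\Pb_0\brc{\max_{n\le L}(S_n-In)>\varepsilon I L}$ by a separate maximal inequality (the paper's Lemma~\ref{LemSmax}), whose proof is exactly where the second-moment assumption $\Eb_0|Z_1|^2<\infty$ is used. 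Saying ``$a\to\infty$ uniformly validates the expansion'' and ``uniform integrability of the overshoots'' does not substitute for this argument, and you yourself flag the slowly-changing/uniformity verification as the delicate step without carrying it out; as written, the proposal is therefore an outline of the paper's proof with its central estimate missing.

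A secondary, smaller issue: your intermediate claim $\Eb_\nu[T_A^r-\nu\mid T_A^r>\nu,R_\nu^r,V_{\nu,\infty}]=(a+\varkappa-\xi_\infty)/I+o(1)$ is not what the nonlinear renewal theorem provides; the theorem expands the unconditional (in the perturbation) expectation, producing $\varkappa$ from the joint renewal-theoretic limit of the overshoot and yielding $\Eb_\nu\brcs{\log\brc{1+V_{\nu,\infty}/(1+R_\nu^r)}\mid T_A^r>\nu,R_\nu^r}$ directly, as in~\eqref{AAAnu}. Conditioning on $V_{\nu,\infty}$ and then integrating it out would need a separate justification, and it is unnecessary: apply the theorem conditionally on $R_\nu^r$ and $\{T_A^r>\nu\}$ only, as the paper does.
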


The proof of Lemma~\ref{Lem1} is given in the Appendix.

\begin{remark} \label{Rem3.1} 
Let $V_\infty = \sum_{j=1}^{\infty} e^{-S_j} $. Note that $V_{\nu, \infty}$ is independent of $R_\nu^r$ and has the same
$\Pb_\nu$-distribution of all $\nu \ge 1$, i.e., it is distributed as  $V_\infty$ under $\Pb_0$.
\end{remark}

Recall that by $R_\infty$ we denote a random variable that has the $\Pb_\infty$-limiting (stationary) distribution of $R_n$ as $n \to\infty$, i.e.,  $\Qb_{\mathrm{st}}(x) = \lim_{n\to\infty} \Pb_\infty(R_n \le x) = \Pb( R_\infty \le x)$. Let
\begin{equation}\label{constant1}
C_\infty=\Eb[\log (1+R_\infty +V_\infty)] = \int_0^\infty \int_0^\infty \log(1+x+y)\,d\Qb_{\mathrm{st}}(x) \,d\widetilde{\Qb}(y),
\end{equation}
and
\begin{equation}\label{constant2}
C_r=\Eb[\log (1+r +V_\infty)] = \int_0^\infty \log(1+r+y)\,d\widetilde{\Qb}(y),
\end{equation}
where $\widetilde{\Qb}(y)=\Pb_0 (V_\infty \le y)$.

The following theorem, whose proof is based on Lemma~\ref{Lem1} and can be found in the Appendix,  provides asymptotic approximations (for large $A$) for the average delay to detection of the SR--$r$ procedure (for large $\nu$ and $\nu=0$), and for the supremum average delay to detection $\SADD(T_A^{\Qb_A})=\Eb_0 T_A^{\Qb_A}$ of the SRP procedure (within vanishing terms $o(1)$).

\begin{theorem}\label{Th2} 
If $\Eb_0 |Z_1|^2<\infty$ and $Z_1$ is non-arithmetic, then for any $r \ge 0$
\begin{equation}\label{ADDinfty}
 \ADD_\infty(T_A^r) =\Eb_0 T_A^{\Qb_A}= \frac{1}{I} \brcs{\log A + \varkappa - C_\infty} +o(1)  \quad \text{as $A\to \infty$},
\end{equation}
and
\begin{equation}\label{ADDzeroSRr}
 \Eb_0 T_A^r = \frac{1}{I} \brcs{\log A + \varkappa - C_r} +o(1)  \quad \text{as $A\to \infty$},
\end{equation}
where $o(1) \to 0$ as $A\to\infty$.
\end{theorem}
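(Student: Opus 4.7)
The plan is to build everything on Lemma~\ref{Lem1}, using the algebraic identity $\log(1+x)+\log(1+y/(1+x))=\log(1+x+y)$ to simplify its conclusion. Under $\Pb_\nu$ the pre-change quantities $R_\nu^r$ and $\{T_A^r>\nu\}$ are functions of $X_1,\dots,X_\nu$ and hence, by Remark~\ref{Rem3.1}, independent of the post-change tail $V_{\nu,\infty}$, whose distribution equals that of $V_\infty$ under $\Pb_0$. First I would use this independence to rewrite the inner conditional expectation in~\eqref{AAAnu} at $R_\nu^r=x$ as $\int\log(1+y/(1+x))\,d\widetilde{\Qb}(y)$, and then combine it with $\log(1+x)$ via the identity to recover $C_x=\Eb\log(1+x+V_\infty)$. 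Lemma~\ref{Lem1} then collapses to
\[
\Eb_\nu(T_A^r-\nu\mid T_A^r>\nu,\,R_\nu^r=x)=\frac{1}{I}\{\log A+\varkappa-C_x\}+o(1),
\]
uniformly on $\{N_A\le\nu<\infty,\ x<A/N_A,\ r\ge 0\}$, and this one-line reformulation is the workhorse for everything below.

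To establish the first half of~\eqref{ADDinfty}, I would average the display against the conditional law of $R_\nu^r$ given $T_A^r>\nu$ and let $\nu\to\infty$. By~\eqref{QS} this conditional law converges to $\Qb_A$, yielding $\ADD_\infty(T_A^r)=I^{-1}\{\log A+\varkappa-\int_0^A C_x\,d\Qb_A(x)\}+o(1)$. The contribution from the region $\{x\ge A/N_A\}$, where Lemma~\ref{Lem1} is silent, is controlled by the Kesten tail estimate~\eqref{Stas} together with the domination $\Qb_A\ge\Qb_{\mathrm{st}}$: the QS mass beyond $A/N_A$ is $O(N_A/A)$ while the associated conditional delay is $O(\log A)$, so the contribution is $o(1)$ by the choice of $N_A$. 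Next I would send $A\to\infty$, combining $\Qb_A(x)\to\Qb_{\mathrm{st}}(x)$ with the uniform integrability of $C_x=O(\log x)$ against $\{\Qb_A\}$ (again via~\eqref{Stas}) to conclude $\int C_x\,d\Qb_A(x)\to C_\infty$. For $\Eb_0 T_A^{\Qb_A}$ I would use $\Eb_0 T_A^{\Qb_A}=\int_0^\infty\Eb_0 T_A^r\,d\Qb_A(r)$ (since $R_0^{\Qb_A}$ is drawn from $\Qb_A$ independently of the observations), substitute~\eqref{ADDzeroSRr}, and repeat the $A\to\infty$ passage to obtain the same limit. This simultaneously verifies the identity $\ADD_\infty(T_A^r)=\Eb_0 T_A^{\Qb_A}+o(1)$, consistent with the equalizer property of the SRP procedure noted in Section~\ref{s:Prelim}.

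It remains to prove~\eqref{ADDzeroSRr}, the case $\nu=0$, to which Lemma~\ref{Lem1} does not directly apply since it requires $\nu\ge N_A\to\infty$. Here I would iterate~\eqref{SR-rstat} under $\Pb_0$ to obtain
\[
\log R_n^r=S_n+\log\!\Bigl(1+r+\sum_{k=1}^{n-1}e^{-S_k}\Bigr),
\]
in which the second summand is a slowly changing sequence converging $\Pb_0$-a.s.\ to $\log(1+r+V_\infty)$. Under the assumptions $\Eb_0|Z_1|^2<\infty$ and the non-arithmeticity of $Z_1$, Woodroofe's nonlinear renewal theorem applies to $T_A^r=\inf\{n:\log R_n^r\ge\log A\}$ and delivers $\Eb_0 T_A^r=I^{-1}(\log A+\varkappa-\Eb\log(1+r+V_\infty))+o(1)=I^{-1}(\log A+\varkappa-C_r)+o(1)$. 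The hardest step I anticipate is the uniform-integrability/compatibility argument underlying $\int C_x\,d\Qb_A(x)\to C_\infty$: one must ensure that the $o(1)$ produced by Lemma~\ref{Lem1} survives both the $\nu\to\infty$ averaging and the $A\to\infty$ limit, while also ruling out contributions from the region $\{R_\nu^r\ge A/N_A\}$. The Kesten tail bound combined with the merely logarithmic growth of $C_x$ is the essential tool that makes this work.
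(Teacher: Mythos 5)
Your treatment of \eqref{ADDzeroSRr} and of $\ADD_\infty(T_A^r)$ is essentially the paper's own proof: the paper likewise applies Lemma~\ref{Lem1}, splits on $\{R_\nu^r< A/N_A\}$ versus its complement (bounding the conditional delay by $O(\log A)$ and the conditional tail probability via Kesten's estimate, so that region contributes $o(1)$), uses the independence of $V_{\nu,\infty}$ from $R_\nu^r$ (Remark~\ref{Rem3.1}) to collapse the two logarithmic terms into $\Eb\log(1+R_\infty+V_\infty)=C_\infty$, and proves \eqref{ADDzeroSRr} by a direct application of Woodroofe's nonlinear renewal theorem to $S_n+\log(1+r+V_{0,n})$, verifying the crucial condition exactly as in Lemma~\ref{Lem1}. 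Your version is, if anything, more explicit than the paper about the two-stage limit (conditional law $\to\Qb_A$ as $\nu\to\infty$, then $\Qb_A\to\Qb_{\mathrm{st}}$ with uniform integrability of $C_x=O(\log x)$ as $A\to\infty$), a point the paper passes over tersely.

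The one place you diverge is the SRP quantity $\Eb_0 T_A^{\Qb_A}$, and there your argument has a gap as stated. You write $\Eb_0 T_A^{\Qb_A}=\int_0^A \Eb_0 T_A^r\,d\Qb_A(r)$ (correct) and then substitute \eqref{ADDzeroSRr} under the integral; but \eqref{ADDzeroSRr} is proved for each fixed $r$, while $\Qb_A$ moves with $A$ and places mass on $r$ of order up to $A$, so the substitution requires the $o(1)$ in \eqref{ADDzeroSRr} to be uniform in $r$ (at least for $r\le A/N_A$, with the remaining $\Qb_A$-mass $O(N_A/A)$ times a delay $O(\log A)$ handled as in your $\nu\to\infty$ step). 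That uniformity is plausibly provable by the same argument that yields Lemma~\ref{Lem1}, but you neither claim nor verify it. The paper avoids the issue entirely: it observes that $\ADD_\infty(T_A^r)=\Eb_0 T_A^{\Qb_A}$ holds \emph{exactly} for every $A$ and every $r$ (by the Markov property, the residual delay given $T_A^r>\nu$, $R_\nu^r=x$ is distributed as $T_A^x$ under $\Pb_0$, and the conditional law of $R_\nu^r$ converges to $\Qb_A$), so only the expansion of $\ADD_\infty(T_A^r)$ needs to be established. Either insert the uniformity-in-$r$ argument or replace this step by the exact identity; with that repair the proposal is sound.
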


Define
\begin{equation} \label{J}
\Jc(T) = \frac{\sum_{\nu=0}^\infty \Eb_\nu(T-\nu|T>\nu)\Pb_\infty(T>\nu)}{\Eb_\infty T}.
\end{equation}

The following lemma provides the lower bound for the supremum average delay to detection in the class $\class$. This bound will be used to obtain an asymptotic lower bound in Theorem~\ref{Th3}  and for the proof of third-order asymptotic optimality of detection procedures in Theorem~\ref{Th4}.

\begin{lemma} \label{Lem2} 
Let $T_A$ be the stopping time of the SR procedure that starts from zero and let the threshold $A=A_\gamma$ be chosen so that $\Eb_\infty T_A=\gamma$. The following lower bound holds:
\begin{equation}\label{LBnonas}
\inf_{T\in \class} \SADD(T) \ge \Jc(T_{A_\gamma}).
\end{equation}
\end{lemma}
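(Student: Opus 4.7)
The plan is to establish the lower bound by splitting it into two inequalities: first, $\SADD(T)\ge\Jc(T)$ for every $T\in\class$; second, $\Jc(T)\ge\Jc(T_{A_\gamma})$ for every $T\in\class$. Chaining these gives $\inf_{T\in\class}\SADD(T)\ge\Jc(T_{A_\gamma})$.

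For the first inequality I would invoke the standard identity $\Eb_\infty T=\sum_{\nu=0}^{\infty}\Pb_\infty(T>\nu)$, valid for every positive integer-valued stopping time. The weights $w_\nu=\Pb_\infty(T>\nu)/\Eb_\infty T$ therefore form a probability distribution on $\{0,1,2,\dots\}$, so that $\Jc(T)=\sum_{\nu\ge 0}w_\nu\,\Eb_\nu(T-\nu|T>\nu)$ is a convex combination of the conditional expected delays and is bounded above by their supremum $\SADD(T)$. The degenerate case $\Eb_\infty T=\infty$ is trivial since then the bound reads $\infty\ge\Jc(T)$.

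For the second inequality I would appeal to the multi-cyclic Bayesian optimality of the classical (zero-start) SR procedure, established in discrete time by Pollak and Tartakovsky~\cite{Pollak+Tartakovsky:SS09} and recalled in Section~\ref{s:intro} of the present paper. The key identification is
$$
\Jc(T)\,\Eb_\infty T = \sum_{\nu=0}^{\infty}\Eb_\nu[(T-\nu)^{+}],
$$
which (up to the denominator) is the integrated Bayes risk under an improper uniform prior on the changepoint; the SR statistic $R_n$ of~\eqref{SRstat} is precisely the unnormalized posterior odds under this prior, so the optimal stopping rule in the corresponding Bayes problem is the threshold rule on $R_n$, i.e.\ $T_{A_\gamma}$. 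The cited result then gives $\Jc(T_{A_\gamma})=\min_{T\in\class}\Jc(T)$, which is the desired second inequality.

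The main technical obstacle is the second step: $\Jc(T)$ is a ratio whose numerator and denominator both depend on $T$, so the false-alarm constraint cannot simply be relaxed by a Lagrangian. The standard route, carried out in~\cite{Pollak+Tartakovsky:SS09}, is to approximate the improper uniform prior by a family of geometric priors, invoke the Shiryaev--Girshick--Rubin Bayes solution (a threshold rule on the posterior odds) for each, and then let the geometric parameter tend to zero while carefully controlling the ARL-to-false-alarm constraint. Because the needed assertion is precisely the content of that reference, I would invoke it as a black box rather than reproduce the limiting argument.
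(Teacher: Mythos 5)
Your proposal is correct and follows essentially the same route as the paper's proof: first bound $\SADD(T)\ge\Jc(T)$ by viewing $\Jc(T)$ as a weighted average of the conditional delays with weights $\Pb_\infty(T>\nu)/\Eb_\infty T$, and then invoke the exact optimality of the zero-start SR procedure for minimizing $\Jc(T)$ over $\class$ from Pollak and Tartakovsky (2009). The extra detail you provide (the identity $\Eb_\infty T=\sum_{\nu\ge0}\Pb_\infty(T>\nu)$ and the discussion of the geometric-prior limiting argument behind the cited result) is consistent with, but not needed beyond, the paper's two-line argument.
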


\begin{proof}
Obviously, for any stopping time $T$
\[
\begin{aligned}
\SADD(T)&=\frac{\sum_{k=0}^\infty [\sup_\nu \Eb_\nu(T-\nu|T>\nu)] \Pb_\infty(T> k)}{\Eb_\infty T}\\
&\ge\frac{\sum_{k=0}^\infty \Eb_k(T-k|T>k) \Pb_\infty(T>k)}{\Eb_\infty T} = \Jc(T).
\end{aligned}
\]
As follows from Pollak~and~Tartakovsky~\cite{Pollak+Tartakovsky:SS09}, the right hand side is minimized by the SR stopping time $T_{A_\gamma}$, so that
\[
\inf_{T\in\class} \SADD(T) \ge \inf_{T\in\class} \Jc(T) = \Jc(T_{A_\gamma})
\]
and the proof is complete.
\end{proof}

The following theorem provides the asymptotic approximation for the lower bound $\Jc(T_A)$. Its proof is given in the Appendix.

\begin{theorem} \label{Th3} 
Let $\Jc(T)$ be defined as in \eqref{J} and $C_\infty$ as in~\eqref{constant1}. If $\Eb_0 |Z_1|^2<\infty$ and $Z_1$ is non-arithmetic, then
\[
\Jc(T_A)=\frac{1}{I} (\log A + \varkappa - C_\infty) +o(1)  \quad \text{as $A\to \infty$},
\]
where $o(1) \to 0$ as $A\to\infty$.
\end{theorem}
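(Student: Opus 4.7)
My plan is to interpret $\Jc(T_A)$ as a weighted average of the conditional delays $\Eb_\nu(T_A-\nu\mid T_A>\nu)$, apply Lemma~\ref{Lem1} to those $\nu$ that carry the bulk of the weight, and let the (quasi-)stationary theory of $R_n$ collapse the random correction to the constant $C_\infty$. Since $\Pb_\nu$ and $\Pb_\infty$ agree on $\mc{F}_\nu$,
\[
\Jc(T_A) = \sum_{\nu\ge 0}\pi_\nu(A)\,\Eb_\nu(T_A-\nu\mid T_A>\nu),\qquad \pi_\nu(A):=\frac{\Pb_\infty(T_A>\nu)}{\Eb_\infty T_A},
\]
with $\{\pi_\nu(A)\}_{\nu\ge 0}$ a probability distribution. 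Fix $N_A=A/\log^2 A$; this satisfies the assumptions of Lemma~\ref{Lem1} for any $\delta\in(0,1)$. Theorem~\ref{Th1} gives $\Eb_\infty T_A\sim A/\zeta$, so $\sum_{\nu<N_A}\pi_\nu(A)\le N_A/\Eb_\infty T_A=O(1/\log^2 A)$. Combined with the Markov-property identity $\Eb_\nu(T_A-\nu\mid T_A>\nu,R_\nu=r)=\Eb_0 T_A^r\le\Eb_0 T_A=O(\log A)$, the block $\nu<N_A$ contributes only $O(1/\log A)=o(1)$ to $\Jc(T_A)$.

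For $\nu\ge N_A$, using $\log(1+R_\nu)+\log\brc{1+V_{\nu,\infty}/(1+R_\nu)}=\log(1+R_\nu+V_{\nu,\infty})$, Lemma~\ref{Lem1} yields, uniformly on $\{R_\nu<A/N_A\}$,
\[
\Eb_\nu(T_A-\nu\mid T_A>\nu,R_\nu)=\tfrac{1}{I}\brcs{\log A+\varkappa-\log(1+R_\nu+V_{\nu,\infty})}+o(1).
\]
On the complementary event $\{R_\nu\ge A/N_A\}\cap\{T_A>\nu\}$, Kesten's tail~\eqref{Stas} combined with $\Qb_A\ge\Qb_{\mathrm{st}}$ gives $\Pb_\infty(R_\nu\ge A/N_A\mid T_A>\nu)\lesssim 1-\Qb_{\mathrm{st}}(A/N_A)\sim N_A/A$, so the associated contribution to the delay is $O(N_A\log A/A)=o(1)$. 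Integrating over $R_\nu$ and summing against $\pi_\nu(A)$, the theorem reduces to
\[
\sum_{\nu\ge N_A}\pi_\nu(A)\,G_\nu(A)\xrightarrow[A\to\infty]{}C_\infty,\qquad G_\nu(A):=\Eb_\infty\!\brcs{\log(1+R_\nu+V_{\nu,\infty})\mid T_A>\nu},
\]
where $V_{\nu,\infty}$ is independent of $\mc{F}_\nu$ with law $\widetilde{\Qb}$ (Remark~\ref{Rem3.1}).

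I would establish this limit in two stages. First, for each fixed large $A$, the conditional $\Pb_\infty$-law of $R_\nu$ given $\{T_A>\nu\}$ converges to $\Qb_A$ as $\nu\to\infty$ by definition~\eqref{QS}; the convergence is geometric in $\nu$, governed by the spectral gap of the $R_n$-kernel killed on $[A,\infty)$, and~\eqref{Stas} supplies the uniform integrability for $\log(1+\cdot)$, yielding $G_\nu(A)\to C_A^{\mathrm{qs}}:=\iint\log(1+x+y)\,d\Qb_A(x)\,d\widetilde{\Qb}(y)$. Second, $\Qb_A\Rightarrow\Qb_{\mathrm{st}}$ as $A\to\infty$, and the same tail estimate gives uniform integrability against $\Qb_A\times\widetilde{\Qb}$, so $C_A^{\mathrm{qs}}\to C_\infty$. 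Since $\pi_\nu(A)$ concentrates on $\nu\asymp\Eb_\infty T_A\sim A/\zeta$, these stages combine to give the claimed limit, and substitution into the decomposition produces the asymptotic expansion. The main obstacle is chaining the two stages uniformly: the spectral gap of the killed chain may shrink as $A\to\infty$, so the first stage must be quantified finely enough that at the scale $\nu\asymp A$, on which the weights $\pi_\nu(A)$ live, the approximation $G_\nu(A)\approx C_A^{\mathrm{qs}}$ holds with an error that still vanishes under the weighted sum; this is where the proof will have to work hardest.
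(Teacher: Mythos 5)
Your skeleton coincides with the paper's proof: write $\Jc(T_A)$ as the $\pi_\nu(A)$-weighted average of the conditional delays, cut the sum at $N_A$ with $N_A\log A/A\to0$, dispose of the early block using $\sup_{\nu}\Eb_\nu(T_A-\nu\mid T_A>\nu)=\Eb_0 T_A=O(\log A)$ together with $\Eb_\infty T_A\ge A$, and treat the late block through Lemma~\ref{Lem1} after truncating on $\{R_\nu<A/N_A\}$ and controlling the complementary event via the Kesten tail~\eqref{Stas} (this is exactly \eqref{Enuupper}--\eqref{Pnuineq} in the paper). Up to the reduction to $\sum_{\nu\ge N_A}\pi_\nu(A)G_\nu(A)\to C_\infty$, your argument and the paper's are the same.

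The genuine gap is the step you yourself flag and then leave open: you never establish that limit, but only sketch a two-stage plan (conditional law of $R_\nu$ given $\{T_A>\nu\}$ converges to $\Qb_A$ at a rate governed by the spectral gap of the killed kernel, then $\Qb_A\Rightarrow\Qb_{\mathrm{st}}$), and you concede that chaining the stages uniformly in $A$ is unresolved. That is the entire analytic content of the theorem beyond bookkeeping, so the proposal is incomplete precisely where it matters. Two further points. First, your rescue remark that $\pi_\nu(A)$ ``concentrates on $\nu\asymp A/\zeta$'' is neither accurate nor sufficient: the weights are spread over all $\nu$ up to order $A$, and the mass on $[N_A,\epsilon A]$ is of order $\epsilon$ for fixed $\epsilon>0$; since the only a priori bound on the delays is $O(\log A)$, you may discard only ranges of $\nu$ of total length $o(A/\log A)$, so the approximation $G_\nu(A)\approx C_\infty$ is needed uniformly down to $\nu\approx N_A$, exactly where a shrinking spectral gap would hurt. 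Second, the paper avoids mixing-rate considerations altogether: the $o(1)$ in Lemma~\ref{Lem1} is already uniform over $\{N_A\le\nu<\infty,\ R_\nu<A/N_A\}$, and the passage from the conditional correction term to the constant $C_\infty$ is carried out as in the proof of Theorem~\ref{Th2}, using the truncation at $L_A=A/N_A$, the bounds \eqref{Enuupper}--\eqref{ElogR}, the identity $\log(1+R_\nu)+\log\brc{1+V_{\nu,\infty}/(1+R_\nu)}=\log\brc{1+R_\nu+V_{\nu,\infty}}$ with $V_{\nu,\infty}$ independent of $R_\nu$, and the convergence of $\Qb_A$ to $\Qb_{\mathrm{st}}$ (Pollak and Siegmund), with no quantitative rate; Theorem~\ref{Th3} then follows by summing the resulting uniform expansion $\Eb_\nu(T_A-\nu\mid T_A>\nu)=\frac1I(\log A+\varkappa-C_\infty)+o(1)$, $\nu\ge N_A$, against the weights, as in your first display. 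If you replace your spectral-gap stage by that argument, the rest of your write-up goes through.
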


Theorem~\ref{Th3} also allows for the following interpretation. Consider the following multi-cyclic detection procedure. Let a stopping time $T$ be applied repeatedly after each alarm, so that  $T_1, T_2, \dots$ are independent copies of $T$ and $T_j$ is the time interval between the $(j-1)$th and $j$th alarms. Clearly, the number $\ell_\nu$ of false alarms before the changepoint $\nu$ is
\begin{equation} \label{ell}
\ell_\nu=\max\{i\colon T_1+\cdots+T_i \le \nu\} ,
\end{equation}
and the real change occurring at the point $\nu+1$  is detected at the time $\mathcal{T}_{\ell_\nu}= T_1+\cdots+T_{\ell_\nu+1}$.
For any fixed $\nu \ge 0$, the average delay to detection of the multi-cyclic (repeated) detection procedure is $\Eb_\nu(\mathcal{T}_{\ell_\nu}-\nu)$. Assuming that the change occurs at a far time horizon (i.e., $\nu\to\infty$), introduce the {\em stationary average detection delay}
\[
\STADD(T)=\lim_{\nu \to\infty} \Eb_\nu(\mathcal{T}_{\ell_\nu} -\nu) .
\]
By applying renewal theory, it can be shown that $\STADD(T) = \Jc(T)$. To this end, see Pollak~and~Tartakovsky~\cite{Pollak+Tartakovsky:SS09}. Furthermore, the SR procedure is exactly optimal in the sense of minimizing the STADD.

Therefore, the following corollary is a direct consequence of Theorem~\ref{Th2} and Theorem~\ref{Th3}.

\begin{corollary} \label{Cor1}
If $Z_1$ is non-arithmetic and $\Eb_0 |Z_1|^2<\infty$, then, as $A\to\infty$,
\[
\ADD_\infty(T_A) = \STADD(T_A) + o(1)
\]
and
\[
\STADD(T_A)=\frac{1}{I} (\log A + \varkappa - C_\infty) +o(1) .
\]
\end{corollary}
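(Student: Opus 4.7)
The plan is to assemble the corollary directly from the two preceding theorems together with the identity $\STADD(T)=\Jc(T)$ already recorded in the paragraph preceding the corollary. There is essentially no new analytic work to do: the hard estimates have been absorbed into Lemma~\ref{Lem1}, Theorem~\ref{Th2}, and Theorem~\ref{Th3}, and the corollary is a matter of reading off and combining them.

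Concretely, first I would invoke Theorem~\ref{Th2} specialized to $r=0$, which (since it asserts the statement for every $r\ge 0$, and in particular covers $r=0$, for which $T_A^r$ reduces to the classical SR stopping time $T_A$) gives
\[
\ADD_\infty(T_A) \;=\; \frac{1}{I}\brcs{\log A + \varkappa - C_\infty} + o(1) \quad \text{as } A \to \infty.
\]
Then I would invoke Theorem~\ref{Th3}, which yields the matching asymptotic expression for the lower bound functional:
\[
\Jc(T_A) \;=\; \frac{1}{I}\brcs{\log A + \varkappa - C_\infty} + o(1) \quad \text{as } A \to \infty.
\]

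Next I would appeal to the renewal-theoretic identity $\STADD(T) = \Jc(T)$, stated in the paragraph before the corollary with reference to Pollak and Tartakovsky~\cite{Pollak+Tartakovsky:SS09}. Applied to $T=T_A$, this immediately promotes the asymptotic expression for $\Jc(T_A)$ into the second claim of the corollary,
\[
\STADD(T_A) \;=\; \frac{1}{I}\brcs{\log A + \varkappa - C_\infty} + o(1).
\]
Subtracting this from the expression for $\ADD_\infty(T_A)$ gives $\ADD_\infty(T_A) - \STADD(T_A) = o(1)$, which is the first claim.

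There is no real obstacle: both asymptotic expansions have the same leading term $(\log A + \varkappa - C_\infty)/I$ and the same $o(1)$ remainder, so the only point worth flagging in the writeup is that Theorem~\ref{Th2} is valid uniformly for any fixed $r\ge 0$ (in particular $r=0$), and that the identification $\STADD(T_A)=\Jc(T_A)$ is the bridge that converts the lower-bound functional $\Jc$ into the operationally meaningful stationary detection delay. With those two remarks in place, the corollary follows in one line from each of the two theorems.
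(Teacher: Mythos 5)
Your proposal is correct and follows exactly the route the paper takes: the corollary is stated there as a direct consequence of Theorem~\ref{Th2} (with $r=0$) and Theorem~\ref{Th3}, glued together by the renewal-theoretic identity $\STADD(T)=\Jc(T)$ from Pollak and Tartakovsky. Nothing is missing, and your remark about the uniformity in $r$ and the role of the identity is exactly the right thing to flag.
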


The following theorem establishes asymptotic optimality of the SRP and SR--$r$ detection procedures under moderate conditions. Its proof is immediate from the above results.

\begin{theorem}\label{Th4} 
Let $\Eb_0\abs{Z_1}^2<\infty$ and let $Z_1$ be non-arithmetic.

\noindent{\rm (i)} Then
\begin{equation}\label{LBasym}
\inf_{T\in\class}\SADD(T) \ge \frac{1}{I} \brcs{\log (\gamma \zeta) + \varkappa - C_\infty} +o(1)  \quad \text{as $\gamma\to \infty$}.
\end{equation}

\noindent{\rm (ii)} If in the SRP procedure $A=A_\gamma =\gamma\zeta$, then $\Eb_\infty T_A^{\Qb_A} = \gamma (1+o(1))$ and
\begin{equation}\label{SADDSRP}
 \SADD(T_{A}^{\Qb_A}) = \frac{1}{I} \brcs{\log (\gamma \zeta) + \varkappa - C_\infty} +o(1)  \quad \text{as $\gamma\to \infty$}.
\end{equation}
Therefore, the SRP procedure is asymptotically third-order optimal in the class $\class${\rm :}
\[
\inf_{T\in \class} \SADD(T) = \SADD(T_{A}^{\Qb_A}) + o(1) \quad \text{as $\gamma\to\infty$}.
\]

\noindent{\rm (iii)} If in the SR--$r$ procedure $A=A_\gamma=\gamma \zeta$, and the initialization point $r$ is either fixed or tends to infinity with the rate $o(\gamma)$ and is selected so that $\SADD(T_A^r) =  \ADD_\infty(T_{A}^r)$, then $\Eb_\infty T_A^r = \gamma (1+o(1))$ and
\begin{equation}\label{SADDSR-r}
 \SADD(T_{A}^r) = \frac{1}{I} \brcs{\log (\gamma \zeta) + \varkappa - C_\infty} +o(1)  \quad \text{as $\gamma\to \infty$}.
\end{equation}
Therefore, the SR--$r$ procedure is asymptotically third-order optimal{\rm :}
\[
\inf_{T\in \class} \SADD(T) = \SADD(T_{A}^r) + o(1) \quad \text{as $\gamma\to\infty$}.
\]
\end{theorem}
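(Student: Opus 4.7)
The plan is to combine Lemma~\ref{Lem2} with Theorems~\ref{Th1},~\ref{Th2}, and~\ref{Th3}; no fresh probabilistic estimate is required, only careful bookkeeping of remainder terms. The organizing observation is that the three quantities $\Jc(T_A)$, $\Eb_0 T_A^{\Qb_A}$, and $\ADD_\infty(T_A^r)$ all share the common expansion $(1/I)[\log A + \varkappa - C_\infty] + o(1)$, and that Theorem~\ref{Th1} translates an ARL constraint of $\gamma$ into a threshold $A=\gamma\zeta(1+o(1))$, hence $\log A = \log(\gamma\zeta) + o(1)$.

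For part~(i), I would let $A_\gamma$ be defined by $\Eb_\infty T_{A_\gamma}=\gamma$, obtain $\log A_\gamma = \log(\gamma\zeta)+o(1)$ from Theorem~\ref{Th1} with $r=0$, and then chain Lemma~\ref{Lem2} (which gives $\inf_{T\in\class}\SADD(T)\ge\Jc(T_{A_\gamma})$) with Theorem~\ref{Th3} (which expands $\Jc(T_{A_\gamma}) = (1/I)[\log A_\gamma + \varkappa - C_\infty] + o(1)$). Substituting the expression for $\log A_\gamma$ yields~\eqref{LBasym}.

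For part~(ii), I would set $A=\gamma\zeta$. Theorem~\ref{Th1} delivers $\Eb_\infty T_A^{\Qb_A} = \gamma(1+o(1))$. Since the SRP procedure is an equalizer, $\SADD(T_A^{\Qb_A}) = \Eb_0 T_A^{\Qb_A}$, and Theorem~\ref{Th2} then gives~\eqref{SADDSRP}. To upgrade this into third-order optimality in $\class$, I would compare the expansion of $\SADD(T_A^{\Qb_A})$ with the lower bound from part~(i). A minor caveat is that $T_A^{\Qb_A}$ may fall just outside $\class$ when $\Eb_\infty T_A^{\Qb_A}<\gamma$; this is circumvented by replacing $A$ with $A'=\gamma\zeta+\eta_\gamma$ for some $\eta_\gamma\to\infty$ with $\eta_\gamma/\gamma\to 0$, which keeps $\log A'=\log(\gamma\zeta)+o(1)$ and forces $\Eb_\infty T_{A'}^{\Qb_{A'}}\ge\gamma$.

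Part~(iii) proceeds in the same pattern: with $A=\gamma\zeta$ and $r$ either fixed or of order $o(\gamma)$, the uniform conclusion of Theorem~\ref{Th1} yields $\Eb_\infty T_A^r=\gamma(1+o(1))$; the standing hypothesis $\SADD(T_A^r)=\ADD_\infty(T_A^r)$ combined with Theorem~\ref{Th2} gives~\eqref{SADDSR-r}; and matching with the lower bound from part~(i) gives third-order optimality, using the same ARL adjustment as in part~(ii) if needed. As the paper itself anticipates by saying the proof is ``immediate from the above results,'' the only real technical care is the bookkeeping of $o(1)$ terms and the one step where $\Eb_\infty T=\gamma(1+o(1))$ must be nudged up to $\Eb_\infty T\ge\gamma$ to remain inside $\class$; this is harmless since the leading and second-order terms in the expansion are unaffected under logarithmic rescaling of $\gamma$.
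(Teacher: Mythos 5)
Your proposal is correct and takes essentially the same route as the paper, whose proof likewise obtains the lower bound \eqref{LBasym} from Lemma~\ref{Lem2} combined with Theorem~\ref{Th3} (via Theorem~\ref{Th1} to relate $A_\gamma$ and $\gamma\zeta$) and obtains \eqref{SADDSRP} and \eqref{SADDSR-r} from Theorems~\ref{Th1} and~\ref{Th2}. Your additional remark about nudging the threshold so that $\Eb_\infty T\ge\gamma$ holds exactly is a small refinement that the paper leaves implicit, and it does not alter the argument since $\log A$ changes only by $o(1)$.
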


\begin{proof}
The asymptotic lower bound \eqref{LBasym} follows from Lemma~\ref{Lem2} and Theorem~\ref{Th3}.
The asymptotic approximations \eqref{SADDSRP} and \eqref{SADDSR-r} (and statements of (ii) and (iii) in whole) follow from Theorems~\ref{Th1} and \ref{Th2}.
\end{proof}

\begin{remark}\label{Rem3.2}
Third-order asymptotic optimality of the SRP procedure follows from Pollak~\cite{Pollak:AS85} (under the sole first moment condition),
so that this result is not new. However, higher-order asymptotic approximation for the SADD~\eqref{SADDSRP} is new.
\end{remark}

\begin{remark}\label{Rem3.3} 
Feasibility of selecting $r$ so that $\SADD(T_A^r)=\ADD_\infty(T_{A}^r)$ follows from numerical experiments performed by Moustakides~et~al.~\cite{Moustakidesetal-SS09} as well as from the example in Section~\ref{s:Example}. See Figure~\ref{fig:Fig1} in Section~\ref{s:Prelim} and Figures~\ref{fig:OC_vs_k} and \ref{fig:SRP_SR_r_ADDk_vs_k_ARL_100} in Section~\ref{s:Example}.
\end{remark}

\begin{remark}\label{Rem3.4} 
Since for the SR procedure $\SADD(T_A)= \Eb_0 T_A$, it follows from Theorem~\ref{Th2} (setting $r=0$ in \eqref{ADDzeroSRr}) that
\[
\SADD(T_A) = \frac{1}{I} (\log A +\varkappa-C_0) + o(1) \quad \text{as $A\to \infty$},
\]
where $C_0 = \Eb_0[\log(1+V_\infty)]$. Since $A=\zeta \gamma$ implies $\Eb_\infty T_A=\gamma(1+o(1))$, it follows that with this choice of threshold
\begin{equation}\label{SADDSR}
\SADD(T_A)= \frac{1}{I} [\log(\zeta \gamma) + \varkappa - C_0] + o(1) \quad \text{as $\gamma \to \infty$}.
\end{equation}
Comparing \eqref{SADDSR} with the lower bound \eqref{LBasym} shows that
\[
\inf_{T\in \class} \SADD(T) = \SADD(T_A) + O(1) \quad \text{as $\gamma \to \infty$} .
\]
Thus, the SR procedure is only second-order asymptotically optimal and the difference is approximately equal to $(C_\infty -C_0)/I$. This difference can be quite large when detecting small changes {\rm(}i.e., when $I$ is small{\rm)}.
\end{remark}

It is worth noting that Theorem~\ref{Th2} suggests that if the initializing point $r=r^*$ is selected from the equation $C_r = C_\infty$, then for the large ARL to false alarm  $\gamma$ the values of the average delays to detection at zero and infinity are approximately equal, $\Eb_0 [T_A^{r^*}] \approx \ADD_\infty(T_A^{r^*})$ (to within small terms $o(1)$). This choice of the head-start is intuitively appealing since we intend to make the SR$-r$ procedure look like an equalizer as much as possible. Obviously, the value of $r^*$ does not depend on $\gamma$, i.e., it is a fixed number that depends on the model. This observation will be further elaborated in Section~\ref{s:Example}. Note also that the fact that the limiting value $\lim_{A\to\infty} r^*_A = r^*$ equating $\Eb_0 [T_A^{r^*}]$ and $\ADD_\infty(T_A^{r^*})$ to within $o(1)$ is a fixed number has been first noticed by Moustakides and Tartakovsky~\cite{MoustTartakovUM} for the problem of detecting a change in the drift of a Brownian motion.  Also,  although starting at $r^*$ causes for a faster initial response than starting at $r=0$, the resemblance to Lucas and Crosier's \cite{Lucas&Crosier82} FIR scheme is secondary:  their method is designed to give a really fast initial response, whereas our goal is to attain asymptotic third-order optimality.   

It is interesting to ask how the average detection delays at infinity $\ADD_\infty(T_A)$, $\ADD_\infty(T_A^r)$, and $\ADD_\infty(T_A^{\Qb_A})$ are related when all three procedures have the same average run length to false alarm $\gamma$. It turns out that the $\ADD_\infty$ is the smallest for the original SR procedure $T_A$. Theorem~\ref{Th5} below proves this statement. Note also that by Theorems~\ref{Th2} and~\ref{Th4} the difference between ADD's of all three procedures  is $o(1)$ as $\gamma\to\infty$.

This result can be proven in two steps: 1) To show that the ARL to false alarm $\Eb_\infty T_A^{\Qb_A}$ of the SRP procedure is increasing in $A$, the threshold (the fact that the ARL to false alarm of the SR--$r$ procedure is increasing in $A$ for a fixed $r$ is obvious); and 2) To show that the average delay $\ADD_\infty(T_A^{\Qb_A})$ of the SRP procedure is increasing in $A$ (obviously, the ADD's at infinity are the same for all three procedures when the same threshold is being used). Since the SR procedure requires the lowest threshold to attain the same false alarm rate, this implies that the SR procedure has the lowest $\ADD_\infty$. We believe that  $\Eb_\infty T_A^{\Qb_A}$ and $\ADD_\infty(T_A^{\Qb_A})$ are both increasing in $A$ in the general case. However, we are able to prove this fact only when the cumulative distribution function of $\log \Lambda_1$ is concave, both pre-change and post-change, something that guarantees  monotonicity properties of the Markov detection statistics.  This is restrictive, but it does hold, for example, in detection of a shift of a normal mean and in detection of a change of the parameter of an exponential distribution.  It also holds for the example considered in the next section.

For $\eta >0$, regard the sequence defined by the recursion
\begin{equation}\label{Reta}
R_{n+1}^{(\eta)} = \brc{\eta + R_n^{(\eta)}} \Lambda_{n+1}, \quad R_0^{(\eta)} =r.
\end{equation}

To prove the required result we need the following lemma whose proof can be found in the Appendix.

\begin{lemma}\label{Lem3} 
Let $F$ be a cumulative distribution function of $\log \Lambda_1$ that is log-concave {\rm(}i.e., $\log F(x)$ is a concave function{\rm)}. Then the process $(M_n)_{n\ge 0}$ that has transition probabilities
\[
\Pb (M_{n+1} \le x | M_n =t) = \Pb \brc{ R_{n+1}^{(\eta)} \le x | R_n^{(\eta)} = t, R_{n+1}^{(\eta)} < A}
\]
is a stochastically monotone Markov process, i.e., $\Pb (M_{n+1} > x | M_n =t)$ is non-decreasing and right-continuous in $t$ for all $x$.
\end{lemma}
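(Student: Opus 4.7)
The plan is to write down the one-step transition kernel of $M_n$ explicitly and then reduce stochastic monotonicity to a classical log-concavity inequality for translated CDFs. First, conditional on $R_n^{(\eta)} = t$, the recursion \eqref{Reta} gives $R_{n+1}^{(\eta)} = (\eta + t)\Lambda_{n+1}$, so that for $0 < x < A$
\[
\Pb(M_{n+1} > x \mid M_n = t) = 1 - \frac{F(\log x - u_t)}{F(\log A - u_t)}, \qquad u_t := \log(\eta + t),
\]
while the cases $x \le 0$ and $x \ge A$ are trivial (probabilities $1$ and $0$). Since $t \mapsto u_t$ is strictly increasing and continuous on $[0,\infty)$, it suffices to prove that
\[
G(u) := \frac{F(a - u)}{F(b - u)}, \qquad a := \log x,\quad b := \log A,\quad a \le b,
\]
is non-increasing and right-continuous in $u$.

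For monotonicity, set $\psi := \log F$. Log-concavity of $F$ means $\psi$ is concave, and hence its increments over an interval of fixed length are non-increasing as the interval is translated to the right. Given $u_1 < u_2$, the intervals $[a - u_2, a - u_1]$ and $[b - u_2, b - u_1]$ both have length $u_2 - u_1$, and the second lies at least as far to the right as the first. Therefore
\[
\psi(a - u_1) - \psi(a - u_2) \ge \psi(b - u_1) - \psi(b - u_2),
\]
which rearranges to $\log G(u_1) \ge \log G(u_2)$. Hence $G$ is non-increasing and $\Pb(M_{n+1} > x \mid M_n = t) = 1 - G(u_t)$ is non-decreasing in $t$, which is the required stochastic monotonicity.

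Right-continuity is essentially a regularity remark. A finite concave function is continuous on the interior of its effective domain, so log-concavity of $F$ forces $F$ (and thus $G$) to be continuous on the interior of the support of $\log \Lambda_1$; combined with continuity of $u_t$ this yields continuity of the transition function in $t$ on the relevant range. Any atom of $F$ can only sit at the boundary of the support, and the ordinary right-continuity of the CDF together with the monotonicity of $t \mapsto u_t$ handles those boundary cases. I expect no real obstacle: the substantive content of the lemma is the single chord-slope comparison above, and everything else is manipulation of the explicit form of the transition kernel plus a routine continuity check.
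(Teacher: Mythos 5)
Your proof is correct and follows the same basic route as the paper's: both write the transition kernel explicitly as $F(\log x-\log(\eta+t))/F(\log A-\log(\eta+t))$ and reduce stochastic monotonicity to showing that $s\mapsto F(y-s)/F(a-s)$ is non-increasing for $y\le a$. The difference is in how that last step is done: the paper differentiates, using that log-concavity makes $f(x)/F(x)$ non-increasing, which tacitly assumes $F$ has a density and is positive at the relevant points; your chord/increment comparison for the concave function $\log F$ needs no differentiability and so is slightly more general and cleaner (you should only add the remark that if $F$ vanishes at the leftmost of the four points the inequality is trivial, so the finite-increment argument applies whenever all four values are positive). One small correction on your continuity remark: since $t\mapsto \log x-\log(\eta+t)$ is \emph{decreasing}, right-continuity in $t$ corresponds to left limits of $F$ at the arguments, not to the ordinary right-continuity of a CDF; so an atom of $F$ at the left edge of its support (which log-concavity does permit) could in principle break right-continuity at one value of $t$. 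This is immaterial here because the paper works under the standing assumption that the distribution of $\Lambda_1$ is continuous, and in fact the paper's own proof addresses only the monotonicity claim and says nothing about right-continuity.
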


\begin{remark}\label{Rem3.5} 
Note that the normal cumulative distribution function $ \Phi(\frac{x-\mu}{\sigma}) $ is log-concave, so the log-likelihood ratio of two normals whose means differ has a log-concave cdf.  The same applies to two differing exponential distributions as well as to two differing beta-distributions considered in Section~\ref{s:Example}.
\end{remark}

We are now prepared to state the desired result. The details of the proof are given in the Appendix.

\begin{theorem}\label{Th5} 
Suppose that the cdf of $\log \Lambda_1$ is log-concave both pre-change and post-change.
Let $1<\gamma<\infty$ be fixed, and let $A_\gamma^r$ be such that the ARL to false alarm of the SR--$r$ procedure $T_{A_\gamma^r}^r=\inf\{n\ge1\colon R_n^r\ge A_\gamma^r\}$ is $\gamma$.
Then $\ADD_\infty(T_{A_\gamma^r}^r)$ is an increasing function of $r$ and
\[
\min_{0 \le r < \infty} \ADD_\infty(T^r_{A_\gamma^r})  = \ADD_\infty(T^0_{A_\gamma^0}) <
\ADD_\infty(T^{\Qb_A}_{A_\Qb}),
\]
where $A_{\Qb}$ is such that  $\Eb_\infty T^{\Qb_A}_{A_\Qb}= \gamma$.
\end{theorem}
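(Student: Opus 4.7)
I follow the two-ingredient strategy announced by the authors in the paragraph just before Lemma~\ref{Lem3}: reduce the comparison of $\ADD_\infty$ across the three procedures to the monotonicity of a single quantity in the threshold, and then produce that monotonicity using the stochastic monotonicity of the conditioned chain from Lemma~\ref{Lem3}.

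\emph{Reduction.} By the Markov property of $(R_n^r)$ under $\Pb_\infty$ up to time $\nu$, followed by a shift to post-change dynamics and the definition~\eqref{QS} of the quasi-stationary distribution,
\[
\Eb_\nu(T_A^r-\nu|T_A^r>\nu) = \int_0^A \Eb_0[T_A|R_0=x]\,\Pb_\infty(R_\nu^r\in dx|T_A^r>\nu) \xrightarrow[\nu\to\infty]{} \Eb_0 T_A^{\Qb_A},
\]
so $\ADD_\infty(T_A^r)=\Eb_0 T_A^{\Qb_A}$ depends on $r$ only through the threshold $A$. A pathwise coupling of~\eqref{SR-rstat} driven by the same $(\Lambda_i)$ shows $R_n^{r_1}\le R_n^{r_2}$ pathwise whenever $r_1\le r_2$, hence $\Eb_\infty T_A^r$ is strictly decreasing in $r$, and averaging against $\Qb_A$ gives $\Eb_\infty T_A^0>\Eb_\infty T_A^{\Qb_A}$ for every $A$. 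Combined with the trivial strict monotonicity of $A\mapsto\Eb_\infty T_A^r$, this forces the implicit thresholds solving $\Eb_\infty T_{A_\gamma^r}^r=\gamma$ to be strictly increasing in $r$ and $A_\gamma^0<A_\Qb$.

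\emph{Main step.} It then suffices to prove that $A\mapsto\Eb_0 T_A^{\Qb_A}$ is strictly increasing. For $A_1<A_2$ I plan to couple two SR chains driven by the same $(\Lambda_i)$ with thresholds $A_1,A_2$ and initial distributions $\Qb_{A_1},\Qb_{A_2}$ respectively. Stochastic monotonicity of the conditioned chain $(M_n)$ supplied by Lemma~\ref{Lem3}, combined with standard comparison theorems for stationary distributions of monotone Markov kernels, yields an ordering of $\Qb_{A_1}$ and $\Qb_{A_2}$ that the coupling preserves along~\eqref{SR-rstat}; together with the fact that raising the threshold strictly postpones hitting, this gives $\Eb_0 T_{A_1}^{\Qb_{A_1}}<\Eb_0 T_{A_2}^{\Qb_{A_2}}$. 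The analogous pre-change statement ($A\mapsto\Eb_\infty T_A^{\Qb_A}$ increasing), needed to ensure uniqueness of $A_\Qb$ in the reduction, admits a shorter Collatz--Wielandt proof: extending the leading eigenfunction of the pre-change substochastic kernel at threshold $A_1$ by zero to $[0,A_2)$ produces a sub-eigenfunction of the $A_2$-kernel, so its Perron eigenvalue $\lambda_A$ is monotone in $A$ and $\Eb_\infty T_A^{\Qb_A}=(1-\lambda_A)^{-1}$ is too.

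\emph{Assembly and principal obstacle.} Combining the reduction with the main step gives $\ADD_\infty(T_{A_\gamma^r}^r)=\Eb_0 T_{A_\gamma^r}^{\Qb_{A_\gamma^r}}$ strictly increasing in $r$ with minimum at $r=0$, and applied at the ordered thresholds $A_\gamma^0<A_\Qb$ it yields the strict inequality $\ADD_\infty(T^0_{A_\gamma^0})<\ADD_\infty(T^{\Qb_A}_{A_\Qb})$. The hard part is the main step: turning the qualitative stochastic monotonicity of Lemma~\ref{Lem3} into a quantitative comparison of $\Eb_0 T_A^{\Qb_A}$ at two thresholds requires controlling \emph{both} the moving absorbing barrier \emph{and} the simultaneously varying quasi-stationary initial distribution, and it is precisely this joint control that log-concavity of the cdf of $\log\Lambda_1$ enables through Lemma~\ref{Lem3} (which is why the authors can only conjecture the result in the general case). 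Once comparability of the quasi-stationary initial distributions is in hand, the monotone coupling of~\eqref{SR-rstat} in its initial value and threshold closes the argument routinely.
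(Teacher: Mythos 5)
Your reduction step is sound and matches the paper: $\ADD_\infty(T_A^r)=\Eb_0 T_A^{\Qb_A}$ depends on $r$ only through $A$, the pathwise coupling in the head start shows $A_\gamma^r$ increases in $r$ and $A_\gamma^0<A_\Qb$, so everything hinges on showing that $A\mapsto\Eb_0 T_A^{\Qb_A}$ (and $A\mapsto\Eb_\infty T_A^{\Qb_A}$) is increasing. It is in this main step that your argument has a genuine gap. Fix $A_1<A_2$. The conditioned kernel $\Pb(R_{n+1}\le x\,|\,R_n=t,\,R_{n+1}<A)=F(\log x-\log(1+t))/F(\log A-\log(1+t))$ is decreasing in $A$ pointwise, i.e.\ the kernel at the larger threshold is stochastically \emph{larger}; combined with Lemma~\ref{Lem3} the comparison theorems you invoke therefore give $\Qb_{A_2}$ stochastically larger than $\Qb_{A_1}$. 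In your coupling the chain with the higher barrier thus also has the stochastically higher starting point, and the two effects oppose each other: a higher start shortens the hitting time while a higher barrier lengthens it. The assertion that ``raising the threshold strictly postpones hitting'' is valid only at a fixed initial value, so the coupling you describe is inconclusive and does not yield $\Eb_0 T_{A_1}^{\Qb_{A_1}}\le\Eb_0 T_{A_2}^{\Qb_{A_2}}$. Your closing sentence concedes that the joint control of barrier and initial law is the hard part, but the claim that comparability of the quasi-stationary laws then closes the argument ``routinely'' is precisely what fails.

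The paper closes this hole with a scaling device absent from your plan. With $\eta=A_2/A_1\ge1$, multiply the threshold-$A_1$ SRP chain by $\eta$: the rescaled chain obeys the head-start recursion \eqref{Reta}, exits $[0,A_2)$ exactly when the original exits $[0,A_1)$, and its quasi-stationary law is the rescaled copy of $\Qb_{A_1}$. Now both competitors have the \emph{same} barrier $A_2$, and one compares the conditioned kernel of the $\eta$-recursion with that of the standard recursion at this common threshold: log-concavity (monotonicity of $f/F$, exactly as in Lemma~\ref{Lem3}) shows the $\eta$-kernel is stochastically larger, the inductive coupling based on stochastic monotonicity shows its quasi-stationary law dominates $\Qb_{A_2}$, and then a second coupling of the unconditioned chains shows the rescaled threshold-$A_1$ process dominates the threshold-$A_2$ SRP process pathwise, hence exits no later. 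This gives $\Eb_j T_{A_1}^{\Qb_{A_1}}\le\Eb_j T_{A_2}^{\Qb_{A_2}}$ simultaneously for $j=0$ and $j=\infty$, which is the comparison your coupling cannot deliver. Your Collatz--Wielandt remark does prove monotonicity of $\Eb_\infty T_A^{\Qb_A}$ in $A$ (and needs no log-concavity, since $\Eb_\infty T_A^{\Qb_A}=(1-\lambda_A)^{-1}$ with $\lambda_A$ the Perron eigenvalue of the pre-change substochastic kernel), but the delay side $\Eb_0 T_A^{\Qb_A}$ — which is the quantity $\ADD_\infty$ appearing in Theorem~\ref{Th5} — is exactly the part left unproved in your proposal.
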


\subsection{Computing Constants $C_r$ and $C_\infty$}\label{ss:Constants}

In order to implement the asymptotic approximations  we have to be able to compute the constants $C_r$ and
$C_\infty$ defined by
\begin{equation}
C_r=\int_0^\infty\log (1+r+y)\,d\widetilde{\Qb}(y),\quad C_\infty=\int_0^\infty\int_0^\infty \log (1+y +x)\,d\Qb_{\mathrm{st}}(x)\,d\widetilde{\Qb}(y),
\label{asympC}
\end{equation}
where $\Qb_{\mathrm{st}}(x)=\lim_{n\to\infty}\Pb_\infty(R_{n}\le x)$ is the stationary distribution of the SR statistic $R_{n}$ under $\Pb_\infty$ and $\widetilde{\Qb}(y)=\lim_{n\to \infty}\Pb_0(V_{n}\le y)$ is the limiting distribution of $V_n=\sum_{i=1}^n e^{-S_i}$ under $\Pb_0$.

Assume that the distribution of $\Lambda_1$ is continuous. Then for computing the constants we need to evaluate the two densities $q_{\mathrm{st}}(x)=d\Qb_{\mathrm{st}}(x)/dx$ and $\tilde{q}(x)=d\widetilde{\Qb}(x)/dx$. Let $R_\infty$ and $V_\infty$ be random variables that are the limit (in distribution, as $n \to \infty$) of $R_n$ and $V_n$, respectively, which have densities $q_{\mathrm{st}}(x)$ and $\tilde{q}(x)$. To find the desired densities, observe that, by recursion~\eqref{SRstatrec}, $R_\infty$ and $(1+R_\infty)\Lambda_1$ have the same density $q_{\mathrm{st}}(x)$ under $\Pb_\infty$. Similarly $V_\infty$ and $(1+V_\infty)\Lambda_1^{-1}$ have the same density $\tilde{q}(x)$ under $\Pb_0$. To see this note that, by the i.i.d.\ property of the data, $V_n$ has the same $\Pb_0$-distribution as the random variable $\widetilde{V}_n=\sum_{i=1}^n \prod_{j=i}^n \Lambda_j^{-1}$, which follows the recursion $\widetilde{V}_n=(1+\widetilde{V}_{n-1})\Lambda_n^{-1}$. Therefore, we have the following integral equations for these densities
\[
q_{\mathrm{st}}(x)=\int_0^\infty q_{\mathrm{st}}(y)\left[\dfrac{\partial}{\partial x}F_\infty\left(\frac{x}{1+y}\right)\right]dy;~~
\tilde{q}(x)=-\int_0^\infty \tilde{q}(y)\left[\dfrac{\partial}{\partial x}F_0\left(\frac{1+y}{x}\right)\right]dy,
\]
where $F_\infty(x) = \Pb_\infty(\Lambda_1 \le x)$ and $F_0(x) = \Pb_0(\Lambda_1 \le x)$ are the corresponding distribution functions of the likelihood ratio $\Lambda_1$.

Thus, $q_{\mathrm{st}}(x)$ and $\tilde{q}(x)$ are the eigenfunctions corresponding to the unit eigenvalues of the linear operators defined, respectively, with the kernels
\[
\mathcal{K}_\infty(x,y)=\dfrac{\partial}{\partial x}F_\infty\left(\frac{x}{1+y}\right),~~
\mathcal{K}_0(x,y)=-\dfrac{\partial}{\partial x}F_0\left(\frac{1+y}{x}\right).
\]
The constants $C_r$ and $C_\infty$ are then obtained, usually by numerical integration.

The next section offers a comparative performance analysis for an example where $C_r$ and $C_\infty$ are computable analytically.

\section{Accuracy of Asymptotic Approximations: An Example}
\label{s:Example}

To verify the accuracy of the asymptotic approximations, we carried out an extensive performance evaluation of the procedures discussed in the earlier sections for the following example. Suppose $\{X_n\}_{n\ge1}$ is a series of independent observations such that $X_1,X_2,\ldots,X_{\nu}$ are $\mathsf{beta}(2,1)$ each, and $X_{\nu+1},X_{\nu+2},\ldots$ are $\mathsf{beta}(1,2)$ each. Put another way, the series undergoes a sudden and abrupt shift in the expected value from $2/3$ pre-change to $1/3$ post-change, while retaining the variance.

To be specific, our goal is to verify the conditions and the accuracy of the asymptotic approximations given in Theorem~\ref{Th2}, Theorem~\ref{Th4}, and Remark~\ref{Rem3.4}, i.e.,
\begin{equation}\label{SADDapprox}
\begin{aligned}
\SADD(T_A^r) & \approx \SADD(T_A^{\Qb_A})\approx \frac{1}{I}\brcs{\log A+\varkappa-C_\infty} ,
\\
\SADD(T_A) & \approx   \frac{1}{I}\brcs{\log A+\varkappa-C_0},
\end{aligned}
\end{equation}
and the approximations for the ARL to false alarm given in~\eqref{ARLapproxSR}, i.e.,
\begin{equation}\label{ARLapprox}
\Eb_\infty T_A^{r}\approx A/\zeta -r \quad\text{and}\quad \Eb_\infty T_A^{\Qb_A} \approx A/ \zeta-\mu_A,
\end{equation}
where $\mu_A$ is the mean of the quasi-stationary distribution.

To undertake this task, it is necessary to be able to calculate the constants $C_r$, $C_\infty$, $\zeta$, and $\varkappa$ and also to compute the initialization point $r$ and the mean $\mu_A$ of the quasi-stationary distribution $\Qb_A$. While usually the constants $C_r$ and $C_\infty$ can be evaluated only numerically or by Monte Carlo, it turns out that for the $\mathsf{beta}$-model considered these constants are computable analytically.

The pre- and post-change probability densities for this scenario are
\begin{align*}
f(x)=2x\Ind{0\le x\le1}\quad\text{and}\quad g(x)=2(1-x)\Ind{0\le x\le1},
\end{align*}
respectively, and the likelihood ratio for the $n$th observation is $\Lambda_n=1/X_n-1$.

The quasi-stationary distribution satisfies the integral equation
\[
\Qb_A(x)=\frac{1}{\lambda_{A}}\int_0^A  F_\infty\left(\frac{x}{1+y}\right) d\Qb_A(y),
\]
where $F_\infty(t) = \Pb_\infty(\Lambda_1\le t)$ and
\[
\lambda_A =  \int_0^A  F_\infty\left(\frac{A}{1+y}\right) d\Qb_A(y).
\]
Since for any $t\ge0$
\begin{equation}\label{Finfty}
\Pb_\infty(\Lambda_1\le t) =1-\Pb_\infty\left(X_n\le\frac{1}{1+t}\right)=1-(1+t)^{-2},
\end{equation}
which is continuously differentiable with respect to $t$, the equation for the quasi-stationary distribution $\Qb_A(x)$ for this model is
\begin{equation}\label{InteqQ}
\lambda_A \, \Qb_A(x)=1-\int_0^A\frac{(1+y)^2}{(1+x+y)^2}\,d\Qb_A(y).
\end{equation}

Note that the quasi-stationary distribution converges to the stationary distribution $\Qb_A(x) \to \Qb_{\mathrm{st}}(x)$ as $A\to\infty$ (cf.\ Pollak and Siegmund~\cite{Pollak+Siegmund:JAP86}).

Equation \eqref{InteqQ} cannot be solved analytically for an arbitrary {\em finite} $A$, but its limiting value when $A\to\infty$ (i.e., the stationary distribution of the statistic $R_n^r$) does permit  a closed-form solution. By~\eqref{InteqQ}, the stationary distribution $\Qb_{\mathrm{st}}(x)$ satisfies the following equation
\begin{align*}
\Qb_{\mathrm{st}}(x)&=1-\int_0^\infty\frac{(1+y)^2}{(1+x+y)^2}\,d\Qb_{\mathrm{st}}(y),
\end{align*}
and the solution is $\Qb_{\mathrm{st}}(x)=[x/(1+x)]\Ind{x\ge0}$.

To derive the equation for $\widetilde{\Qb}(x)$, observe first that for $t\ge0$
\begin{align*}
\Pb_0(1/\Lambda_1\le t)&=\Pb_0(\Lambda_1\ge1/t)=\Pb_0\left(X_n\le\frac{t}{1+t}\right)\\
&=\frac{t}{1+t}\left(2-\frac{t}{1+t}\right)=1-(1+t)^{-2},
\end{align*}
which is identical to $\Pb_\infty(\Lambda_1\le t)$. As a result, the distribution $\widetilde{\Qb}(x)$ satisfies precisely the same equation as $\Qb_{\mathrm{st}}(x)$ and, therefore,
\begin{equation} \label{Qstzerobeta}
\widetilde{\Qb}(x)=\Qb_{\mathrm{st}}(x) =\frac{x}{1+x} \Ind{x\ge0}.
\end{equation}

Using \eqref{asympC} and \eqref{Qstzerobeta}, one is able to calculate the constants $C_r$ and $C_\infty$ {\em exactly} as
\begin{equation}\label{C0infty}
C_r= \frac{1+r}{r} \log (1+r) \quad\text{and}\quad C_\infty=\frac{\pi^2}{6}\approx 1.6449.
\end{equation}
In particular, $C_0=1$.

Note that the Kullback-Leibler information number $I=1$, so that
\[
\SADD(T_A^r) \approx \SADD(T_A^{\Qb_A})\approx\log A+\varkappa -1.6449, \quad \SADD(T_A) \approx  \log A+\varkappa-1.
\]

Unfortunately, neither the limiting average overshoot $\varkappa$ nor the limiting average ``exponential'' overshoot $\zeta$ are computable exactly. Monte Carlo simulations with $10^6$ trials have been used to estimate the two as $\varkappa\approx1.255$ and $\zeta\approx0.426$ with the standard error less than $10^{-3}$.  Specifically, these estimates were obtained from the formulas
\[
\varkappa=\frac{\Eb_0 S_1^2}{2\Eb_0S_1}-\sum_{k=1}^\infty\frac{1}{k}\Eb_0[S_k^-], \quad
\zeta=\frac{1}{I}\exp\left\{-2\sum_{k=1}^\infty\frac{1}{k}\biggl[\Pb_0(S_k\le0)+\Pb_\infty(S_k>0)\biggr]\right\},
\]
where $S_k=\sum_{j=1}^k\log\Lambda_j$, $S_k^-=\min(0, S_k)$, and $I=\Eb_0[\log\Lambda_1]$ (see, e.g., Woodroofe~\cite{Woodroofe:Book82}). The first fraction in the first formula is computable analytically. The only issue is the infinite sum.
To evaluate this sum it was first truncated at $10^5$.
An extreme-value-theoretic argument shows that the weight of the dropped tail is of order of the machine precision.
This makes it safe to assume that the sum of the first $10^5$ terms is effectively equal to the original infinite sum.
The second source of errors is the expectations under the sum.
These expectations are not computable analytically, and therefore Monte Carlo simulations were used:
we generated $10^6$ trajectories of $S_1,S_2,\ldots,S_{100000}$, and performed averaging across the trajectories to find $\Eb_0[S_k^-]$ for each $k=1,2,\ldots,100000$. The sum in the second formula was evaluated in a similar manner.

Despite the fact that in the example considered the distributions $\Qb_{\mathrm{st}}(x)$ and $\widetilde{\Qb}(x)$ are obtainable exactly, neither the quasi-stationary distribution, required for the SRP procedure, nor the conditional average delay to detection $\Eb_\nu(T-\nu|T>\nu)$ for $\nu\ge0$ and the ARL to false alarm seem feasible to get analytically. To overcome this difficulty, these quantities were computed numerically, using the approach undertaken by Moustakides~et~al.~\cite{Moustakidesetal-SS09} with the number of breakpoints set at $5\times10^4$, high enough to ensure a relative error in the order of a fraction of a percent.

Specifically, let $j=\{\infty,0\}$.   For $A>0$, $r\ge0$, and $\nu \ge 0$, define $\phi_j(r)=\Eb_j T_A^r$, $\delta_\nu(r)=\Eb_\nu[(T_A^r-\nu)^+]$, $p_\nu(r)=\Pb_\infty(T_A^r>\nu)$, and  $F_j(x)=\Pb_j(\Lambda_1\le x)$. Using the Markov property of the statistic $R_n^r$, the following integral equations and recursions for operating characteristics are obtained by Moustakides~et~al.~\cite{Moustakidesetal-SS09}:
\[
\begin{aligned}
\phi_j(r) & =1+\int_0^A\phi_j(x)\left[\dfrac{\partial}{\partial x}F_j\left(\frac{x}{1+r}\right)\right]dx, \quad j=0,\infty,
\\
\delta_\nu(r) & =\int_0^A\delta_{\nu-1}(x)\left[\dfrac{\partial}{\partial x} F_\infty\left(\dfrac{x}{1+r}\right)\right]dx,\quad \nu\ge 1,
\\
p_\nu(r) &=\int_0^Ap_{\nu-1}(x)\,\frac{\partial}{\partial x}F_\infty\left(\frac{x}{1+r}\right)dx, \quad \nu\ge 1
\end{aligned}
\]
with the initial conditions $\delta_0(r)=\Eb_0 T_A^r=\phi_0(r)$ and $p_0(r) =1$. These integral equations yield the ARL to false alarm $\Eb_\infty T_A^r$ and the sequence of average detection delays $\Eb_\nu(T_A^r-\nu | T_A^r > \nu) = \delta_{\nu}(r)/p_\nu(r)$, $ \nu=0,1, \dots$ as functions of the starting point $r\ge0$. The distribution function $F_\infty(x)$ is defined in \eqref{Finfty} and $F_0(x)=[x/(1+x)]^2$, $ x \ge 0$.

In order to implement the SRP procedure $T_A^{\Qb_A}$ as well as to evaluate its performance, we need to compute the quasi-stationary distribution $\Qb_A(x)$, which  satisfies the integral equation~\eqref{InteqQ}. The ARL to false alarm $\Eb_\infty T_A^{\Qb_A}$ and the average detection delay $\Eb_0 T_A^{\Qb_A}$ of the SRP procedure are then computed as
\[
\Eb_j T_A^{\Qb_A}=\int_0^A \Eb_j [T_A^r]\,d\Qb_A(r)=\int_0^A \phi_j(r)\,d\Qb_A(r),\quad j=\infty,0.
\]
We recall that the SRP procedure is the equalizer, so that $\Eb_\nu(T_A^{\Qb_A}-\nu | T_A^{\Qb_A} > \nu)=  \Eb_0 T_A^{\Qb_A}$ for all $\nu\ge1$.

Finally, by Moustakides~et~al.~\cite{Moustakidesetal-SS09}, the lower bound $\Jc(T_A)$ given in Lemma~\ref{Lem2} is computed as $\Jc(T_A)= \psi(0)/\phi_\infty(0)$, where $\psi(r)$ is the solution of the integral equation
\[
\psi(r)=\phi_0(r)+\int_0^A\psi(r)\, \brcs{\frac{\partial}{\partial x}F_\infty\left(\frac{x}{1+r}\right)} \,dx.
\]

The above integral equations allow us to compute numerically operating characteristics of both SR--$r$ and SRP procedures as well as the mean of the quasi-stationary distribution $\mu_A$.

\begin{figure}[h]
    \centering
    \includegraphics[width=0.9\textwidth]{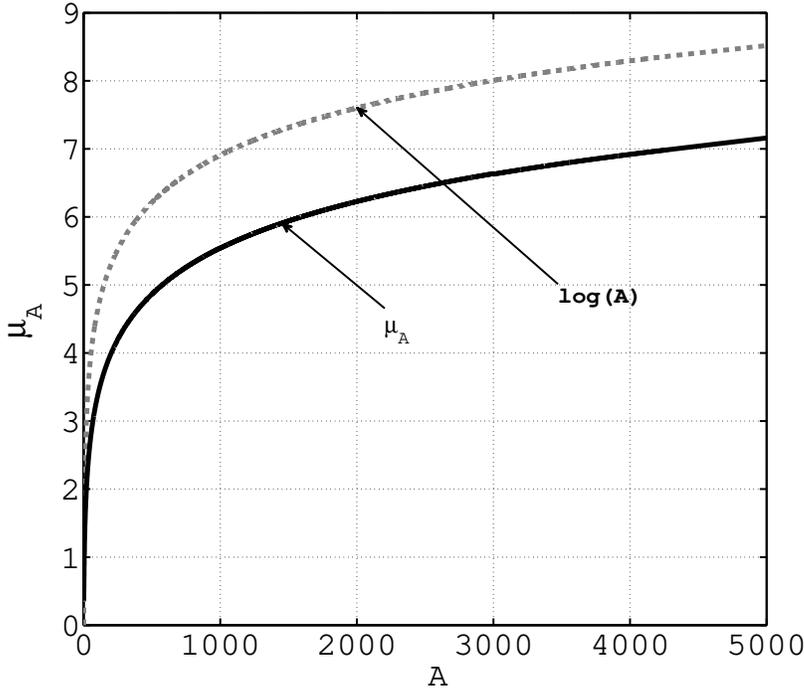}\vspace{-3mm}
    \caption{The mean $\mu_A$ of the quasi-stationary distribution $\Qb_A(x)$ as a function of the detection threshold $A$. The $\log(A)$ function is plotted to demonstrate that  $\mu_A=\log A +O(1)$.}
    \label{fig:mu_A_A}
\end{figure}

At this point the only unresolved question is that of how to choose $r$. To this end, several options are proposed by Moustakides~et~al.~\cite{Moustakidesetal-SS09}; one of the options is $r=\mu_A$. Recall that Theorem~\ref{Th4} requires (a) $r=o(A)$ as $A \to \infty$ and (b) $\SADD(T_A^r) = \ADD_\infty(T_A^r)$. If $r=\mu_A$, then condition (a) is satisfied, since by \eqref{meanQSupper} and \eqref{meanQS},  $\mu_A \le O(\log A)$ and $\mu_A=\log A +O(1)$. This is also illustrated in Figure~\ref{fig:mu_A_A}, which shows that the inequality $\mu_A\le O(\log A)$ and the equality $\mu_A=\log A +O(1)$ are indeed  satisfied.

The condition (b) is also satisfied even for small values of the ARL to false alarm, as can be seen from Figure~\ref{fig:OC_vs_k} which shows how $\Eb_\nu(T-\nu|T>\nu)$ evolves as $\nu$ runs from 0 to 10 for the SRP procedure and for the SR--$r$ procedure with $r=\mu_A$. The ARL to false alarm is about 100 for both procedures. Observe that the SR--$r$ procedure attains supremum at infinity, i.e., as $\nu\to\infty$. Also, the stationary regime kicks in as early as at $\nu=6$, and this is for $\Eb_\infty[T]\approx 100$. In addition, Figure~\ref{fig:OC_vs_k} illustrates Theorem~\ref{Th5} -- $\ADD_\infty(T)$ is indeed the smallest for the SR procedure, while the difference is small. We iterate that it is easily shown that the log-concavity conditions of Theorem~\ref{Th5} hold in the example considered, i.e., $\log [\Pb_\infty(\log\Lambda_1 \le x)]$ and $\log [\Pb_0(\log\Lambda_1 \le x)]$ are concave functions.

Table~\ref{tab:OC_vs_ARL} provides  values of the supremum average delay to detection $\SADD$ and the lower bound $\Jc(T_A)$ versus the ARL to false alarm $\Eb_\infty[T]$. Also presented in parentheses are the corresponding theoretical predictions made based on the asymptotic approximations~\eqref{SADDapprox} and~\eqref{ARLapprox}. Its is seen that the approximations for the ARL to false alarm are fairly accurate even for small values of the ARL such as 50, while the approximations for the $\SADD$ and the lower bound become accurate for the moderate false alarm rate ($\ARL=500$ and higher). 

\begin{figure}[htb]
    \centering
    \includegraphics[width=0.95\textwidth]{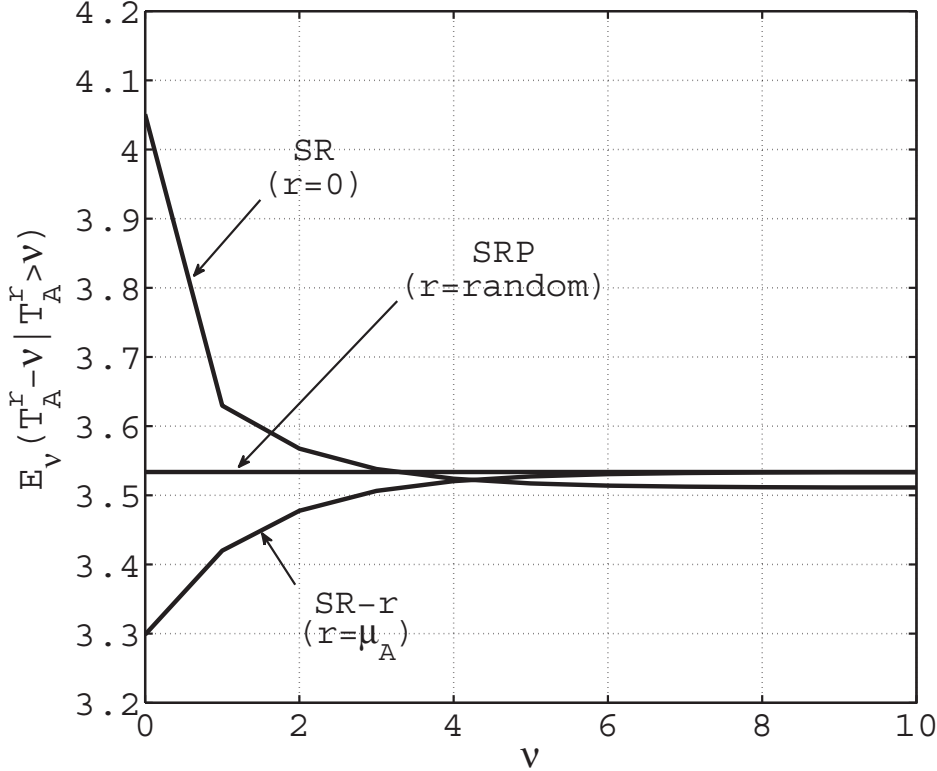}\vspace{-3mm}
    \caption{Results of numerical evaluation of the conditional average detection delay vs. changepoint $\nu$ of the SR, SRP and SR--$r$ {\rm(}$r=\mu_A${\rm)} procedures. The ARL to false alarm $\Eb_{\infty}[T]\approx 100$.}
    \label{fig:OC_vs_k}
\end{figure}

\begin{sidewaystable}[p]
    \centering
    \caption{Summary of the results of numerical evaluation of operating characteristics of the SR, SRP and SR--$r$ procedures. Numbers in parentheses are computed using the asymptotic approximations.}
    \begin{tabular}{|l||c||c|c|c|c|c|}
    \hline Test &$\gamma$ &50 &100 &500 &1000 &10000\\
        \hline
    \hline
        \multirow{3}[8]{0.9cm}{SR}
        &$A$ &21.0 &42.0 &212.0 &424.5 &4256.0\bigstrut\\\cline{2-7}
        & ARL to false alarm  &50.412 (49.342) &99.832 (98.684) &499.866 (498.12) &999.797 (997.415) &9999.675 (10000.0) \bigstrut\\\cline{2-7}
        &$\SADD$ &3.407 (3.312) &4.051 (4.005) &5.622 (5.615) &6.309 (6.308) &8.607 (8.611)\bigstrut\\\cline{2-7}
        \hline
    \hline
        \multirow{3}[8]{0.9cm}{SRP}
        &$A$ &21.5 &43.0 &213.5 &426.5 &4259.0 \bigstrut\\\cline{2-7}
        & ARL to false alarm  &49.635 (48.48) &99.664 (98.431) &499.424 (497.595) &999.87 (997.404) &9999.81 (10000.066)\bigstrut\\\cline{2-7}
        &$\SADD$ &2.942 (2.668) &3.534 (3.361) &5.021 (4.97) &5.692 (5.663) &7.965 (7.966) \bigstrut\\
        \hline
    \hline
        \multirow{4}[8]{0.9cm}{SR--$r$}
        &$A$ &21.5 &43.0 &213.5 &426.5 &4259.0 \bigstrut\\\cline{2-7}
        &$r=\mu_A$ &2.037 &2.603 &4.052 &4.711 &6.982 \bigstrut\\\cline{2-7}
        & ARL to false alarm &49.554 (48.48) &99.582 (98.431) &500.52 (497.595) &999.792 (997.404) &9999.735 (10000.066)\bigstrut\\\cline{2-7}
        &$\SADD$ &2.942 (2.668) &3.534 (3.361) &5.023 (4.97) &5.692 (5.663) &7.965 (7.966) \bigstrut\\\hline
    \hline
        &Lower Bound &2.939 (2.668) &3.523 (3.361) &5.017 (4.97) &5.688 (5.663) &7.965 (7.966)\\
        \hline
    \end{tabular}
    \label{tab:OC_vs_ARL}
\end{sidewaystable}

Another possible way of starting the SR$-r$ procedure is from the value of $r$ for which the average detection delay at the point $\nu=0$ is equal (at least approximately) to the $\ADD_\infty$ (i.e., in the steady-state mode), as has been proposed in Section~\ref{ss:ADD}. In the asymptotic setting this is equivalent to  finding a point $r=r^*$ for which $C_{\infty}$ is equal to $C_r$. Clearly, $r^*$ is a {\em fixed} number that does not depend on $A$ since $C_{\infty}$ does not depend on $r$ and $A$. Using \eqref{C0infty},
 we obtain the transcendental equation
\begin{align*}
\frac{1+r^*}{r^*}\log(1+r^*)&=\frac{\pi^2}{6},
\end{align*}
and the solution is $r^{*}\approx 1.98$.

\begin{figure}[htb]
    \centering
    \includegraphics[width=0.95\textwidth]{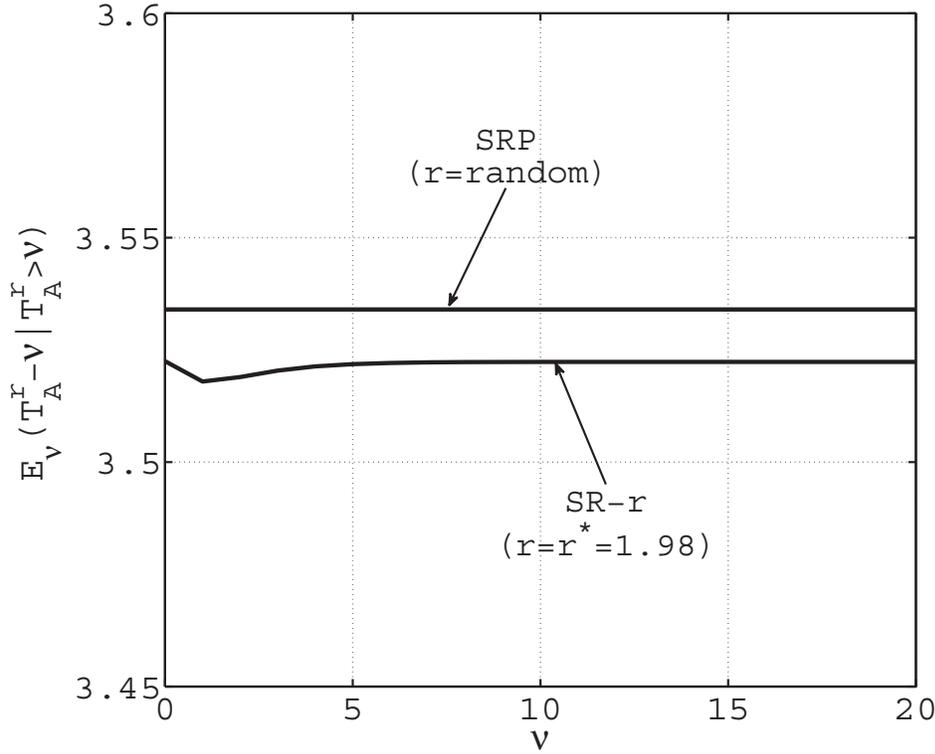}
    \caption{Conditional average detection delay vs. changepoint $\nu$ for the SRP procedure and for the SR--$r$ procedure with $r=r^*= 1.98$. The ARL to false alarm $\Eb_{\infty}[T]\approx 100$.}
    \label{fig:SRP_SR_r_ADDk_vs_k_ARL_100}
\end{figure}

Figure~\ref{fig:SRP_SR_r_ADDk_vs_k_ARL_100} shows the average delay to detection $\Eb_\nu[T-\nu|T>\nu]$ versus the changepoint $\nu$ for the SR$-r$ procedure with $r=r^*=1.98$ and for the SRP procedure. Observe that for the SR$-r$ procedure the average delay at $\nu=0$ is equal to that at infinity, as was planned. More importantly, the point $\nu=0$ is the worst (supremum) point (along with large $\nu$). Also, it can be seen that the SR$-r$ procedure is {\em uniformly} (i.e., for all $\nu\ge 0$) better than the SRP procedure, although in this example the difference is practically negligible. We also note that this initialization is better than starting off at the mean of the quasi-stationary distribution, while the difference in performance is very small -- the $\SADD$ is equal to $3.54$ for $r=\mu_A$ and $3.52$ for $r=r^*$. This allows us to conclude that the SR$-r$ is robust with respect to the initialization point in a certain range.

\section{Conclusions}

We considered three different versions  of the Shiryaev--Roberts procedure, with the difference being the starting point -- the conventional SR procedure, where $R_0=0$, Pollak's modification of this procedure, where $R_0$ is sampled from the quasi-stationary distribution of the SR statistic, and  a generalization where $R_0=r\ge0$ is a specially designed  deterministic number, proposed by Moustakides~et~al.~\cite{Moustakidesetal-SS09}. For each of the procedures we derived asymptotic formulas for operating characteristics when the threshold $A$ is high and showed that asymptotically when the ARL to false alarm is large the SR--$r$ and SRP procedures are both asymptotically third-order minimax.  We emphasize that third-order asymptotic optimality of the SR--$r$ procedure has been established under the conjecture that the worst changepoint is at infinity, which is justified numerically for several examples, including the one considered in Section~\ref{s:Example}. Unfortunately, we have not been able to prove this conjecture analytically. In addition, we performed a comparative efficiency analysis of the detection procedures to verify the accuracy of the asymptotic approximations and demonstrated the proximity of the latter to the real values for a specific example. The results of numerical analysis allow us to conclude that (in the minimax sense) performance of the SR--$r$ procedure that starts with the mean of the quasi-stationary distribution as well as with a point that equates the average detection  delay at zero and infinity  is almost indistinguishable from that of the SRP procedure.

 \section*{Acknowledgement}

We are grateful to a referee for useful suggestions that have improved the presentation in the article.

\Appendix

\section{Auxiliary Results and Proofs}

\begin{lemma}\label{LemSmax} 
Let $Y_1, Y_2, \dots$ be i.i.d.\ with $\Eb Y_1=0$ and $\Eb Y_1^2=\sigma^2<\infty$. Let $S_n = Y_1+\cdots+Y_n$. Then for all $\varepsilon >0$
\[
N \, \Pb\brc{\max\limits_{1\le n \le N}|S_n| > \varepsilon N}  \xra[N\to\infty]{} 0.
\]
\end{lemma}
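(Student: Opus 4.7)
The plan is to combine a Lévy--Ottaviani-type maximal inequality with a uniform integrability (UI) argument, upgrading the naive Chebyshev bound $N\,\Pb(|S_N|>\varepsilon N)=O(1)$ to $o(1)$. First I would reduce the maximum to $|S_N|$ itself: by Chebyshev, for every $n\le N$,
\[
\Pb(|S_N-S_n|>\varepsilon N)\le \frac{\sigma^{2}(N-n)}{\varepsilon^{2} N^{2}}\le \frac{\sigma^{2}}{\varepsilon^{2} N},
\]
which is at most $1/2$ once $N$ is large. Ottaviani's inequality would then deliver
\[
\Pb\brc{\max_{1\le n\le N}|S_n|>2\varepsilon N}\le 2\,\Pb(|S_N|>\varepsilon N),
\]
so it suffices to prove $N\,\Pb(|S_N|>\varepsilon N)\to 0$.

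Next, Markov's inequality supplies
\[
N\,\Pb(|S_N|>\varepsilon N)\le \frac{1}{\varepsilon^{2} N}\Eb\brcs{S_N^{2}\mathbf{1}_{\{|S_N|>\varepsilon N\}}}
=\frac{1}{\varepsilon^{2}}\Eb\brcs{\frac{S_N^{2}}{N}\mathbf{1}_{\{S_N^{2}/N>\varepsilon^{2} N\}}},
\]
so it would be enough to show that $\{S_N^{2}/N\}_{N\ge 1}$ is uniformly integrable, since the cut-off $\varepsilon^{2} N$ in the indicator tends to infinity with $N$.

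To establish this UI, I would truncate each $Y_i$ at a fixed level $M>0$: set $U_i:=Y_i\mathbf{1}_{\{|Y_i|\le M\}}-\Eb Y_1\mathbf{1}_{\{|Y_1|\le M\}}$ and $V_i:=Y_i-U_i$, so that $U_i$ and $V_i$ are centered i.i.d., $|U_i|\le 2M$, and $\Eb V_1^{2}\le b_M:=\Eb Y_1^{2}\mathbf{1}_{\{|Y_1|>M\}}\to 0$ as $M\to\infty$. A fourth-moment computation for bounded centered i.i.d.\ summands would give $\sup_N N^{-2}\Eb(U_1+\cdots+U_N)^{4}<\infty$, so $\{(U_1+\cdots+U_N)^{2}/N\}_N$ is $L^{2}$-bounded and hence UI, while $\Eb(V_1+\cdots+V_N)^{2}/N\le b_M$. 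The inequality $S_N^{2}\le 2(\sum_i U_i)^{2}+2(\sum_i V_i)^{2}$ combined with Cauchy--Schwarz on the bounded part would then yield
\[
\sup_N\Eb\brcs{\frac{S_N^{2}}{N}\mathbf{1}_{\{S_N^{2}/N>K\}}}\le \frac{C_M}{\sqrt{K}}+2b_M,
\]
and sending $K\to\infty$ first and then $M\to\infty$ would close out the UI.

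The hard part will be extracting the extra vanishing factor beyond Chebyshev: the argument must recognize that the event $\{|S_N|>\varepsilon N\}$ sits deep in the tail of $S_N^{2}/N$, which is precisely where uniform integrability pays off, and the parameters $K$ and $M$ must be driven to infinity in the correct order ($K$ first, then $M$), otherwise the bound degenerates.
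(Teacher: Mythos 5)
Your proposal is correct, but it takes a genuinely different route from the paper's. The paper handles the maximum in one stroke by applying Doob's maximal inequality to the nonnegative submartingale $S_n^2$, which bounds $N\,\Pb(\max_{n\le N}|S_n|>\varepsilon N)$ by $\varepsilon^{-2}\Eb[(S_N^2/N)\One_{\{\max_n S_n\ge\varepsilon N\}}]$, and then shows this expectation vanishes by combining the Central Limit Theorem ($S_N^2/(N\sigma^2)\Rightarrow\xi^2$) with the exact identity $\Eb[S_N^2/N]=\sigma^2$ and an $L\wedge(\cdot)$ truncation of the limiting functional --- in effect an asymptotic uniform-integrability argument that never names uniform integrability. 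You instead reduce the maximum to the endpoint via Ottaviani's inequality (with the Chebyshev verification of its hypothesis, which is fine since $\varepsilon$ is arbitrary), and then prove honest uniform integrability of $\{S_N^2/N\}$ directly, by truncating the increments $Y_i$ at level $M$, controlling the bounded part through a fourth-moment bound plus Cauchy--Schwarz and the unbounded part through $\Eb V_1^2\le b_M\to0$, with the limits taken in the order $K\to\infty$ then $M\to\infty$ as you correctly flag. Both arguments rest on the same underlying observation --- the event $\{|S_N|>\varepsilon N\}$ lies deep in the tail of $S_N^2/N$, so integrability beyond Chebyshev's $O(1)$ is what yields $o(1/N)$ --- but your version is more elementary (no martingale theory, no CLT, only moment computations) at the cost of the extra Ottaviani reduction and the truncation bookkeeping, while the paper's is shorter once Doob's inequality and the CLT are taken as black boxes. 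Your sketch has no gaps that would not be routine to fill.
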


\proof
Applying Doob's maximal submartingale inequality to the submartingale $S_n^2$, we obtain
\[
\begin{aligned}
\Pb \brc{\max_{n \le N} |S_n| \ge \varepsilon N} & \le  \frac{1}{(\varepsilon N)^2} \Eb\brcs{S_N^2 \Ind{\max\limits_{n \le N} S_{n} \ge  \varepsilon N}}
\\
&=\frac{1}{\varepsilon^2 N} \Eb\brcs{\brc{\frac{S_N^2}{N}} \Ind{\max\limits_{n \le N} S_{n} \ge  \varepsilon N}} .
\end{aligned}
\]
First, it follows that
\[
\Pb\brc{\max_{n\le N} |S_n| \ge \varepsilon N} \le \frac{\sigma^2}{\varepsilon^2 N}\xrightarrow[N\to\infty]{} 0 .
\]
Now, we show that
\[
 \Eb\brcs{\brc{\frac{S_N^2}{N}} \Ind{\max\limits_{n \le N} S_{n} \ge  \varepsilon N}}
\xrightarrow[N\to\infty]{} 0,
\]
which implies that $\Pb\big(\max\limits_{n\le N}|S_n| > \varepsilon N\big) = o(1/N)$ as $N\to\infty$, i.e., the desired result.

By the second moment condition, $\Eb(S_N^2/N)=\sigma^2<\infty$.  Hence, by the Central Limit Theorem,
\[
\frac{S^2_N}{N \sigma^2}\xrightarrow[n\to\infty]{\rm law}\xi^2,
\]
where $\xi$ is a standard normal random variable.

Finally, for any $L<\infty$ we have
\begin{align*}
\begin{split}
\Eb\brc{\frac{S_N^2}{N}\Ind{\max\limits_{n\le N} S_n > \varepsilon N}}
&=
\Eb\brcs{\brc{L\wedge\frac{S_N^2}{N}}\Ind{\max\limits_{n\le N} S_n > \varepsilon N}}+{}\\
&\qquad\qquad\qquad{}+\Eb \brcs{\brc{\frac{S_N^2}{N}-L\wedge\frac{S_N^2}{N}}\Ind{\max\limits_{n\le N} S_n > \varepsilon N}}
\\
&\le L\Pb\brc{\max\limits_{n\le N} S_n > \varepsilon N }+ \Eb \brc{\frac{S_N^2}{N}-L\wedge\frac{S_N^2}{N}}
\\
&\le \frac{L\sigma^2}{\varepsilon^2 N}+\sigma^2-\Eb \brc{L\wedge\frac{S_N^2}{N}}
\\
&\xrightarrow[N\to\infty]{}\sigma^2-\Eb \brc{L\wedge\xi^2\sigma^2} \xrightarrow[L\to\infty]{}\sigma^2(1-1)=0.
\end{split}
\end{align*}
\endproof

\renewcommand{\proof}{{\em Proof of Theorem~\ref{Th1}.}}

\begin{proof} 
It follows from Pollak~\cite{Pollak:AS87} that for the SR procedure
\[
\Eb_\infty T_A=(A/\zeta)(1+o(1)) , \quad A\to\infty.
\]

Since $R_n^r= r e^{S_n}+ R_n \ge R_n$, we have
\begin{equation} \label{ARLupper}
\Eb_\infty T_A^r \le \Eb_\infty T_A = (A/ \zeta)(1+o(1)) \quad \text{for any} ~ r \ge 0.
\end{equation}

For some positive $m$, define
\[
M=\inf\{n\colon r e^{S_n} \ge m\}.
\]

Observe that
\[
\Eb_\infty T_A^r  = \Eb_\infty(T_A^r; T_A^r <M) + \Eb_\infty(T_A^r; T_A^r \ge M)
\]
and
\[
R_{T_A^r} =R^r_{T_A^r} -r\exp\{S_{T_A^r}\} \ge A-m \quad \text{on $\{T_A^r <M\}$}.
\]
Hence, $T_{A-m} \le T_A^r$ on $\{T_A^r <M\}$, which implies that
\[
\Eb_\infty (T_A^r; T_A^r <M) \ge \Eb_\infty(T_{A-m}; T_A^r <M).
\]
Therefore, we have the following chain of equalities and inequalities:
\begin{align*}
\Eb_\infty T_A^r & = \Eb_\infty(T_A^r; T_A^r <M) + \Eb_\infty(T_A^r; T_A^r \ge M)
\\
& \ge  \Eb_\infty(T_{A-m}; T_A^r <M) + \Eb_\infty(T_A^r; T_A^r \ge M)
\\
& = \Eb_\infty T_{A-m} + \Eb_\infty(T_A^r-T_{A-m}; T_A^r \ge M)
\\
& \ge \Eb_\infty T_{A-m} + \Eb_\infty(T_A^r-T_{A-m}; T_{A-m} > T_A^r \ge M)
\\
& =  \Eb_\infty T_{A-m} - \Eb_\infty(T_{A-m}- T_A^r; T_{A-m} > T_A^r \ge M)
\\
& =  \Eb_\infty T_{A-m} - \Eb_\infty(T_{A-m}- T_A^r | T_{A-m} > T_A^r \ge M) \Pb_\infty (T_{A-m} > T_A^r \ge M)
\\
& \ge \Eb_\infty T_{A-m} - \Eb_\infty T_{A-m} \Pb_\infty(M<\infty) ,
\end{align*}
where the last inequality stems from $\{T_{A-m} > T_A^r \ge M\} \subseteq \{M < \infty\}$ (so that $ \Pb_\infty\{T_{A-m} > T_A^r\ge M\} \le \Pb_\infty\{M <\infty\}$) and from
\[
\begin{aligned}
\Eb_\infty(T_{A-m}- T_A^r | T_{A-m} > T_A^r \ge M)  & = \Eb_\infty\{\Eb_\infty \brcs{ R_{T_{A-m}} - T_A^r | T_A^r} | T_{A-m} > T_A^r \ge M\}
\\
& \le \Eb_\infty R_{T_A-m} = \Eb_\infty T_{A-m}.
\end{aligned}
\]

Note that $e^{S_n}$ is a nonnegative $\Pb_\infty$-martingale with mean 1, so that
\begin{equation}\label{Martineq}
\Pb_\infty (M<\infty) = \Pb_\infty \brc{\inf\{n\colon e^{S_n} \ge m/r\}< \infty} < r/m,
\end{equation}
and we obtain
\begin{equation}\label{ARLlower}
\begin{aligned}
\Eb_\infty T_A^r&\ge\Eb_\infty T_{A-m}(1-r/m)=\frac{A-m}{\zeta}(1-r/m)(1+o(1))
\\
&=(A/\zeta)(1-m/A)(1-r/m) (1+o(1)) .
\end{aligned}
\end{equation}
Let $r_A^* \to \infty$ and $m=m_A\to \infty$ so that $r_A^*/m_A \to 0$ and $m_A/A \to 0$ (which can always be arranged). Then, uniformly in $ 0 \le r \le r_A^*$,
\[
\Eb_\infty T_A^r \ge (A/\zeta) (1+o(1)), \quad A \to \infty,
\]
which along with the reverse inequality~\eqref{ARLupper} proves asymptotic equality~\eqref{ARLrA} whenever $r_A^*=o(A)$ as $A\to\infty$, and if $r^*$ does not depend on $A$ the result obviously holds.

Similar to \eqref{Martineq},
\[
\Pb_\infty (M<\infty | R_0^{\Qb_A}=x)<x/m_A.
\]
Thus, for the SRP procedure, by conditioning on $R_0^{\Qb_A}$, we obtain
\[
\Pb_\infty (M <\infty) = \int_0^A \Pb_\infty (M<\infty | R_0^{\Qb_A}=x) d\Qb_A(x) \le \frac{1}{m_A} \int_0^A x d\Qb_A(x) = \frac{\mu_A}{m_A},
\]
where $\mu_A = \Eb R_0^{\Qb_A}= \int_0^A x d \Qb_A(x)$ is  the mean of the quasi-stationary
distribution. By \eqref{meanQSupper},  $\mu_A/m_A  \le O(m_A^{-1}\log A)$ and to obtain~\eqref{ARLSRP} it suffices to take $m_A=A^{1/2}$ (say).
\end{proof}

\renewcommand{\proof}{{\em Proof of Lemma~\ref{Lem1}.}}

\begin{proof}
For any $\nu \ge 0$, the SR--$r$ statistic can be written as
\begin{align*}
R_{\nu+n}^r & = (1+ R_\nu^r) \prod_{i=\nu+1}^{\nu+n} \Lambda_i + \brc{\prod_{i=\nu+1}^{\nu+n} \Lambda_i}  \sum_{j=\nu+1}^{\nu+n-1} \brc{\prod_{k=\nu+1}^{j} \Lambda_k^{-1}}
\\
& = \brc{ \prod_{i=\nu+1}^{\nu+n} \Lambda_i} \brc{1+ R_\nu^r + \sum_{j=\nu+1}^{\nu+n-1} e^{-S_j^{\nu+1}}}.
\end{align*}

Thus, we have
\begin{align*}
\log R_{\nu+n}^r & = \sum_{i=\nu+1}^{\nu+n} \log \Lambda_i +\log \brc{1+ R_\nu^r + \sum_{j=\nu+1}^{\nu+n-1}
e^{-S_j^{\nu+1}}}
\\
& = \sum_{i=\nu+1}^{\nu+n} Z_i+\log \brc{1+ R_\nu^r + V_{\nu,n}} ,
\end{align*}
where $V_{\nu,n} = \sum_{j=\nu+1}^{\nu+n-1} e^{-S_j^{\nu+1}}$.

On $\{T_A^r >\nu\}$ the stopping time $T_A^r$ can be written as
\begin{equation}\label{STSRr}
T_A^r = \inf \set{n\ge1\colon \sum_{i=\nu+1}^{\nu+n} Z_i+ \log \brc{1+\frac{V_{\nu,n}}{1+R_\nu^r} }\ge \log \brc{\frac{A}{1+R_\nu^r}} }.
\end{equation}
Note that on $\{R_\nu^r <A/N_A\}$,
\[
\log \brc{\frac{A}{1+R_\nu^r}} > \log \brc{\frac{A N_A}{N_A+A}} =\log N_A +o(1) \to \infty \quad \text{as $A \to \infty$} .
\]

Therefore, nonlinear renewal theory can be applied to the sequence
\[
\sum_{i=\nu+1}^{\nu+n} Z_i +\log \brc{1+\frac{V_{\nu,n}}{1+R_\nu^r}}, \quad n \ge 1.
\]
Note also that
\[
0 < \log \brc{1+\frac{V_{\nu,n}}{1+R_\nu^r}} < \log\brc{1+ V_{\nu,n}},
\]
so that the sequence
\[
\log \brc{1+\frac{V_{\nu,n}}{1+R_\nu^r}} , \quad n \ge 1
\]
satisfies the conditions of the nonlinear renewal theorem of Woodroofe~\cite[Theorem~4.5]{Woodroofe:Book82} uniformly in $R_\nu^r$.  Indeed, the sequence $\{V_{\nu, n}\}_{n\ge 1}$ is slowly changing and converges  $\Pb_\nu-$a.s. (as $n \to \infty$) to the finite random variable $V_{\nu, \infty}= \sum_{j=\nu+1}^{\infty} e^{-S_j^{\nu+1}}$.  The nonlinear renewal theorem yields the following asymptotic approximation:
\begin{equation*}
\begin{aligned}
\Eb_\nu (T_A^r - \nu | T_A^r >\nu, R_\nu^r)  & = \frac{1}{I}  \Bigg\{\log A + \varkappa - \log(1+R_\nu^r)-{}
\\
 &{}\qquad\qquad -\Eb_\nu\brcs{\log\brc{1+ \frac{V_{\nu, \infty}}{1+R_\nu^r}} \Bigg | T_A^r >\nu, R_\nu^r} \Bigg\} + o(1) ,
\end{aligned}
\end{equation*}
where $o(1) \to 0$ as $A\to \infty$ uniformly on $\{N_A \le \nu < \infty, R_\nu^{r}< A/N_A, 0 \le r < \infty\}$.

Note that all the necessary conditions of this theorem hold trivially.
The only condition that requires checking is the following: For some $\varepsilon >0$,
\begin{equation} \label{2nd1}
(\log A) \, \Pb_\nu \brc{T_A^r -\nu \le \varepsilon I^{-1} \log A |T_A^r > \nu, R_\nu^r } \xra[A\to\infty]{} 0  \quad \text{on $\{R_\nu^r < A/N_A\}$ } .
\end{equation}

Let $L=L_{A,\varepsilon} = (1-\varepsilon)I^{-1} \log A$, $p_\nu(A,\varepsilon) = \Pb_\nu(T_A^r \le \nu + L | T_A^r > \nu, R_\nu^r)$, and $\Bc_{A,\varepsilon}= \set{T_A^r \le \nu +L_{A,\varepsilon}}$. Changing the measure $\Pb_\infty \mapsto  \Pb_\nu$, we obtain that for any $C >0$ and $\varepsilon \in (0,1)$
\[
\begin{aligned}
\Pb_\infty(T_A^r \le \nu + L | T_A^r > \nu, R_\nu^r) &  =
\Eb_\nu\brcs{\exp\set{-S_{T_A^r}^{\nu+1}} \Ind{\Bc_{A,\varepsilon} } | T_A^r > \nu, R_\nu^r}
\\
& \ge \Eb_\nu\Big[\exp\set{-S_{T_A^r}^{\nu+1}} \Ind{\Bc_{A,\varepsilon},S_{T_A^r}^{\nu+1} \le C}  | T_A^r > \nu, R_\nu^r\Big]
\\
& \ge e^{-C} \Big [\Pb_\nu\brc{T_A^r \le \nu + L | T_A^r > \nu, R_\nu^r} -
\\
& \quad \Pb_\nu \Big(\max_{1\le n \le L} S_{\nu+n}^{\nu+1} > C | T_A^r > \nu, R_\nu^r \Big)\Big ].
\end{aligned}
\]
Setting $C=(1+\varepsilon) I L$ and noting that
\[
\begin{aligned}
\Pb_\nu \brc{\max_{1\le n \le L} S_{\nu+n}^{\nu+1} > C | T_A^r > \nu, R_\nu^r } & = \Pb_0 \brc{\max_{1\le n \le  L} S_{n} > C},
\\
\Pb_\nu\brc{T_A^r \le \nu + L | T_A^r > \nu, R_\nu^r} &= \Pb_\infty \brc{\nu < T_A^r \le \nu + L | T_A^r > \nu, R_\nu^r}
\\
& =  \Pb_\infty \brc{\max_{1\le n \le L} R_{\nu+ n}^r \ge A | T_A^r > \nu, R_\nu^r }
\\
&=  \Pb_\infty \brc{\max_{1\le n \le L} R_{\nu+ n}^r \ge A | R_\nu^r },
\end{aligned}
\]
we obtain
\[
p_\nu(A,\varepsilon) \le \alpha_\nu(A,\varepsilon)  + \beta(A,\varepsilon) ,
\]
where
\[
\alpha_\nu(A,\varepsilon) = e^{(1+\varepsilon) I L}\Pb_\infty \brc{\max_{1\le n \le L} R_{\nu+ n}^r \ge A | R_\nu^r }, ~~
\beta(A,\varepsilon) = \Pb_0 \brc{\max_{1\le n \le  L} S_{n} > (1+\varepsilon) I L}.
\]

Note that $\{R_{\nu+ n}^r\}_{n\ge 1}$ is a non-negative $\Pb_\infty$-submartingale with mean $\Eb_\infty(R_{\nu+n}^r | R_\nu^r)=n+R_\nu^r$. By Doob's submartingale inequality,
\[
\begin{aligned}
\alpha_\nu(A,\varepsilon) & \le \brc{e^{(1+\varepsilon) I L}} \frac{L + R_\nu^r}{A}
= \brc{e^{(1-\varepsilon^2) \log A}} \frac{(1-\varepsilon) I^{-1} \log A + R_\nu^r}{A}
\\
& =   \frac{(1-\varepsilon) I^{-1} \log A + R_\nu^r}{A^{\varepsilon^2}} = o(1/\log A) \quad \text{on} \, \, R_{\nu}^r < A/N_A.
\end{aligned}
\]

It remains to show that $\beta(A,\varepsilon)=o(1/\log A)$ as $A\to\infty$ for some $0<\varepsilon<1$.  Note that
\[
\begin{aligned}
\beta(A,\varepsilon) & =\Pb_0 \brc{\max_{1\le n \le L} (S_n-I L) > \varepsilon I L} \le \Pb_0 \brc{\max_{1\le n \le L} (S_n-I n) > \varepsilon I L}
\end{aligned}
\]
By Lemma~\ref{LemSmax},
\[
L \, \Pb_0 \brc{\max_{1\le n \le L} (S_n-I n) > \varepsilon I L} \xra[L\to\infty]{} 0,
\]
so that $\beta(A,\varepsilon) = o(1/L)=o(1/\log A)$.
Thus, condition~\eqref{2nd1} holds, the asymptotic approximation~\eqref{AAAnu} follows and the proof is complete.
\end{proof}

\renewcommand{\proof}{{\em Proof of Theorem~\ref{Th2}.}}

\begin{proof}
Obviously, $\ADD_\infty(T_A^r) =\Eb_0 T_A^{\Qb_A}$ for any fixed $r \ge 0$ and any $A>0$, so that it suffices to prove asymptotic expansion \eqref{ADDinfty} only for $ \ADD_\infty(T_A^r)$.

Write $L_A=A/N_A$. Note that $L_A^{-1}=o(1/\log A)$ as $A\to\infty$. Obviously,
\begin{align} \label{Enu}
\begin{split}
\Eb_\nu(T_A^r -\nu |T_A^r >\nu)
&=
\Eb_\nu(T_A^r -\nu |T_A^r >\nu,  R_\nu^r< L_A ) \Pb_\infty (R_\nu^r <L_A|T_A^r >\nu)+{}
\\
&{}+\Eb_\nu(T_A^r -\nu |T_A^r >\nu,  R_\nu^r \ge L_A) \Pb_\infty (R_\nu^r \ge L_A |T_A^r >\nu)
\\
&=\Eb_\nu(T_A^r -\nu |T_A^r >\nu,  R_\nu^r< L_A )+{}\\
&{}+\Pb_\infty (R_\nu^r \ge L_A |T_A^r >\nu)\Eb_\nu(T_A^r -\nu |T_A^r >\nu,  R_\nu^r \ge L_A)-{}
\\
&-\Pb_\infty (R_\nu^r \ge L_A |T_A^r >\nu) \Eb_\nu(T_A^r -\nu |T_A^r >\nu,  R_\nu^r < L_A).
\end{split}
\end{align}
Note that for large enough $A$
\begin{equation}\label{Enuupper}
\Eb_\nu (T_A^r -\nu |T_A^r >\nu, R_\nu^r) = \Eb_0 \brc{T_A^{R_\nu^r}} \le \frac{2 \log A}{I}
\end{equation}
and there exists $\nu_A^*$ such that for all $\nu>\nu_A^*$, by Kesten~\cite[Theorem~5]{Kesten:AM73},
\begin{equation} \label{Pnuineq}
\Pb_\infty(R_\nu^r \ge L_A | T_A^r >\nu) \le 2(1- \Qb_{st}(L_A)) = \frac{2}{L_A} (1+o(1)) \to 0 \quad \text{as $A \to \infty$}
\end{equation}
(when $r\ge A$ first condition on $R_1^r$, conditional on $T_A^r>\nu$), so that the second term in the last equality in \eqref{Enu} is $o(1)$. By Lemma~\ref{Lem1}, the first term in the last equality in \eqref{Enu} is equal to
\[
\begin{aligned}
\Eb_\nu(T_A^r -\nu |T_A^r >\nu,  R_\nu^r< L_A ) & = \frac{1}{I} \Bigg\{ \log A + \varkappa -
\Eb_\infty \brcs{\log(1+R_\nu^r) | T_A^r > \nu, R_\nu^r < L_A}
\\
& \quad -  \Eb_\nu\brcs{\log\brc{1+ \frac{V_{\nu,\infty}}{1+R_\nu^r}} \Bigg | T_A^r >\nu, R_\nu^r < L_A} \Bigg \} +o(1) .
\end{aligned}
\]

Now,
\[
\begin{aligned}
& \Eb_\infty \brcs{\log(1+R_\nu^r) | T_A^r > \nu, R_\nu^r < L_A}
\\
& = \frac{\Eb_\infty \brcs{\log(1+R_\nu^r) | T_A^r > \nu} - \Eb_\infty \brcs{\log(1+R_\nu^r) | T_A^r > \nu, R_\nu^r \ge L_A} \Pb_\infty (R_\nu^r \ge L_A| T_A^r > \nu)}{1-\Pb_\infty (R_\nu^r \ge L_A| T_A^r > \nu)}
\end{aligned}
\]
where by \eqref{Pnuineq}   $\Pb_\infty(R_\nu^r \ge L_A | T_A^r >\nu)=o(1/\log A)$. Also,  since $R_\nu^r < A$ on $\{T_A^r > \nu\}$,
\[
0 \le  \Eb_\infty \brcs{\log(1+R_\nu^r) | T_A^r > \nu, R_\nu^r \ge L_A}  \le \log (1+A),
\]
so that
\begin{equation}\label{ElogR}
\begin{aligned}
 \Eb_\infty \brcs{\log(1+R_\nu^r) | T_A^r > \nu, R_\nu^r < L_A} &=  \Eb_\infty \brcs{\log(1+R_\nu^r) | T_A^r > \nu}
 \\
 & \times \brcs{1+ \frac{\Pb_\infty(R_\nu^r \ge L_A|T_A^r >\nu)}{1-\Pb_\infty(R_\nu^r \ge L_A|T_A^r >\nu)}} +o(1)
 \\
& = \Eb_\infty [\log(1+R_\nu^r) | T_A^r >\nu] +o(1)
\end{aligned}
\end{equation}
(since $\Eb_\infty [\log(1+R_\nu^r) | T_A^r >\nu]<\log (1+A)$). By Remark~\ref{Rem3.1},  $V_{\nu, \infty}$ is independent of $R_\nu^r$ and distributed as  $V_\infty$ under $\Pb_0$. Since $\ADD_\infty(T_A^r)$ exists, it follows that
\[
\begin{aligned}
\ADD_\infty(T_A^r) & = \lim_{\nu\to\infty} \Eb_\nu(T_A^r-\nu|T_A^r >\nu)
\\
& = \frac{1}{I} \set{ \log A + \varkappa - \Eb\brcs{\log(1+R_\infty) } - \Eb\brcs{\log \brc{1+\frac{V_\infty}{1+R_\infty}}}} +o(1)
\\
& = \frac{1}{I} \set{ \log A + \varkappa - \Eb\brcs{\log \brc{1+R_\infty +V_\infty}} }+o(1) \quad \text{as $A \to \infty$}
\end{aligned}
\]
and the proof of~\eqref{ADDinfty} is complete.

It remains to prove the validity of asymptotic approximation \eqref{ADDzeroSRr}. Putting $\nu=0$ in \eqref{STSRr}, we obtain that  the stopping time of the SR$-r$ procedure can be written as
\[
T_A^r = \inf \set{n\ge1\colon S_n + \log \brc{1+ r +V_{0,n}} \ge \log A },
\]
where $V_{0,n} = \sum_{j=1}^{n-1} e^{-S_j}$ and $\{S_n\}_{n\ge 0}$ is the random walk with the drift $I$ (under $\Pb_0$).  The sequence
$\{\log(1+r+ V_{0,n})\}_{n\ge 1}$ is slowly changing and converges $\Pb_0-$a.s. to the random variable $\log(1+r+ V_{\infty})$ whose expectation is equal to $C_r$.
The crucial condition
\[
(\log A) \, \Pb_0 \brc{T_A^r \le \varepsilon I^{-1} \log A} \xra[A\to\infty]{} 0 \quad \text{for some}~\varepsilon >0
\]
holds in an analogous way that yields \eqref{2nd1}. Therefore, nonlinear renewal theory can be applied to yield
\begin{equation*}
\Eb_0[T_A^r]   = \frac{1}{I} (\log A + \varkappa - C_r ) + o(1)  \quad \text{as}~ A\to\infty,
\end{equation*}
and the proof is complete.
\end{proof}

\renewcommand{\proof}{{\em Proof of Theorem~\ref{Th3}.}}

\begin{proof}
Recall that $N_A=o(A/\log A)$. We have
\[
\begin{aligned}
\Jc(T_A)
&=
\frac{\sum_{\nu=0}^\infty \Eb_\nu(T_A-\nu|T_A>\nu) \Pb_\infty(T_A>\nu)}{\Eb_\infty T_A}
\\
&=\frac{\sum_{\nu=0}^{N_A} \Eb_\nu(T_A-\nu|T_A>\nu) \Pb_\infty(T_A>\nu)}{\Eb_\infty T_A} +{}\\
&{}\qquad\qquad+\frac{\sum_{\nu=N_A+1}^\infty \Eb_\nu(T_A-\nu|T_A>\nu) \Pb_\infty(T_A>\nu)}{\Eb_\infty T_A}.
\end{aligned}
\]
Since $\Eb_\infty T_A \ge A$ and since for a sufficiently large $A$,
\[
\sup_{\nu \ge 0} \Eb_\nu (T_A-\nu |T_A >\nu) =\Eb_0 T_A \le \frac{2\log A}{I},
\]
for the first term, denoted as $\Jc_1$, we have
\[
\Jc_1 \le \frac{N_A \Eb_0 T_A}{\Eb_\infty T_A} \le \frac{2 N_A \log A}{I A} \xra[A\to\infty]{} 0,
\]
so that $\Jc_1=o(1)$ as $A \to \infty$.

Write
\[
D_A=\frac{1}{I} \brc{\log A + \varkappa - C_\infty}.
\]
By Lemma~\ref{Lem1}, uniformly in $N_A \le \nu <\infty$
\[
\Eb_\nu(T_A-\nu|T_A >\nu) = D_A + o(1) \quad \text{as $A \to \infty$}.
\]
Therefore, for the second term (denote it as $\Jc_2$) we have
\[
\begin{aligned}
\Jc_2 & =  \frac{\sum_{\nu=1}^{N_A} D_A \Pb_\infty(T_A>\nu) + \sum_{\nu=N_A+1}^\infty [D_A+o(1)]  \Pb_\infty(T_A>\nu) }
{\Eb_\infty T_A}-{}\\
&{}\qquad\qquad\qquad\qquad\qquad\qquad-\frac{\sum_{\nu=1}^{N_A} D_A \Pb_\infty(T_A>\nu) }{\Eb_\infty T_A}
\\
& = D_A +o(1)-\frac{D_A \sum_{\nu=1}^{N_A} \Pb_\infty(T_A>\nu)}{\Eb_\infty T_A}  = D_A +o(1),
\end{aligned}
\]
where the last equality follows immediately from the fact that
\[
\frac{D_A \sum_{\nu=1}^{N_A} \Pb_\infty(T_A>\nu)}{\Eb_\infty T_A} \le \frac{(2\log A) N_A}{I A} \xra[A\to\infty]{}  0 ,
\]
where the inequality holds for a sufficiently large $A$. The required result follows.
\end{proof}

\renewcommand{\proof}{{\em Proof of Lemma~\ref{Lem3}.}}

\begin{proof}
We must show that $\Pb_\infty(R_{n+1}^{(\eta)} \le x| R_n^{(\eta)}=t, R_{n+1}^{(\eta)} <A)$ is a non-increasing function of  $t$.

For $x < A$ and $\theta >0$,
\begin{equation}\label{PR}
\begin{aligned}
\Pb \brc{ R_{n+1}^{(\eta)} \le x | R_n ^{(\eta)} = t, R_{n+1}^{(\eta)} < A} & = \frac{\Pb_\infty(\Lambda_1 \le x/(\eta+t))}{\Pb_\infty(\Lambda_1 \le A/(\eta+t))}
\\ & = \frac{F(\log x -\log(\eta+t))}{F(\log A -\log(\eta+t))}
\\
& = \frac{F(y-s)}{F(a-s)},
\end{aligned}
\end{equation}
where we used the notation $y=\log x $, $s = \log (\eta+t)$ and $a=\log A $.

If $\log F(x)$  is concave, then $(\log F(x))^{\prime\prime} \le 0$,  so
$(\log F(x))^\prime =f(x)/F(x)$  is a non-increasing function of $x$. Therefore, we have
\begin{align*}
\frac{\partial}{\partial s} \Pb \brc{ R_{n+1}^{(\eta)} \le x | R_n ^{(\eta)} = t(s), R_{n+1}^{(\eta)} < A} & = \frac{\partial}{\partial s} \brc{\frac{F(y-s)}{F(a-s)}}
\\
& = \frac{-f(y-s) F(a-s) + F(y-s) f(a-s)}{F^2(a-s)}
\\
& = \frac{F(y-s)}{F(a-s)} \brcs{\frac{f(a-s)}{F(a-s)} - \frac{f(y-s)}{F(y-s)} } \le 0 ,
\end{align*}
which implies that $\Pb(R_{n+1}^{(\eta)} \le x| R_n^{(\eta)}=t, R_{n+1}^{(\eta)} <A)$ is a non-increasing function of  $t$.
\end{proof}

\renewcommand{\proof}{{\em Proof of Theorem~\ref{Th5}.}}

\begin{proof}
Let $F$ denote the cdf of $\log \Lambda_1$, and let $\Pb$ denote probability when $F$ is the cdf of $\log \Lambda_1$, when observations are iid.  The following applies both for $F$ defined by $X_i\sim f$  as well as for $F$ defined by $X_i\sim g$.  Under the assumption of the theorem, $F$ is log-concave.

If $r_0<r_1$, then $R_n^{r_0}<R_n^{r_1}$ for all $n\ge0$, so that  $\{R_n^{r_1}\}$ crosses $A$ no later (and often earlier) than $\{R_n^{r_0}\}$. Hence,  $\Eb_\infty T_A^r$ is a decreasing function of $r$ and $A_\gamma^r$ is an increasing function of $r$. (Note that this is true in general, with no assumption of log-concavity.)

Now, note that $\ADD_\infty(T_{A}^{r}) = \ADD_\infty(T_{A}^{\Qb_{A}}) = \Eb_0 T_{A}^{\Qb_{A}}$ and, therefore, it suffices to show that $\Eb_0 T_{A}^{\Qb_{A}}$ and $\Eb_\infty T_{A}^{\Qb_{A}}$ are increasing functions of $A$.

Let $(M_n^{(1)})_{n\ge 0}$, $(M_n^{(2)})_{n\ge 0}$, and $(M_n^{(3)})_{n\ge 0}$ be Markov processes governed respectively by
\[
\begin{aligned}
\Pb\brc{M_{n+1}^{(1)} \le x | M_n^{(1)} =t} & = \Pb \brc{R_{n+1}^{(1)} \le x | R_n^{(1)} = t, R_{n+1}^{(1)} < A}
\\
\Pb\brc{M_{n+1}^{(2)} \le x | M_n^{(2)} =t} & = \Pb \brc{R_{n+1}^{(1)} \le x | R_n^{(1)} = t, R_{n+1}^{(1)} < \eta A}
\\
\Pb\brc{M_{n+1}^{(3)} \le x | M_n^{(3)} =t} & = \Pb \brc{R_{n+1}^{(\eta)} \le x | R_n^{(\eta)} = t, R_{n+1}^{(\eta)} < \eta A} ,
\end{aligned}
\]
where $\eta \ge 1$ and $R_n^{(\eta)}$ satisfies \eqref{Reta} with zero initial condition $R_0^{(\eta)}=0$.

Note that $M_{n}^{(3)}= \eta M_{n}^{(1)}$, and therefore, $M_{n}^{(1)}$ exits $[0,A)$ at the same time that $M_{n}^{(3)}$ exits $[0,\eta A)$ regardless of the distribution of $\Lambda_n$. It follows that $\Qb_A^{(1)}(x)=\Qb_{\eta A}^{(3)}(\eta x)$, where
$\Qb_A^{(1)}$ and $\Qb_{\eta A}^{(3)}$ are the $\Pb_\infty$-stationary distributions of  $M_{n}^{(1)}$ and  $M_{n}^{(3)}$, which are also the corresponding $\Pb_\infty$-quasi-stationary distributions of the Markov processes $R_n^{(1)}$ and $R_n^{(\eta)}$.

Let $0\le t<\eta A$. In a manner similar to~\eqref{PR},
\[
\begin{aligned}
\Pb \brc{ M_{n+1}^{(3)} \le x | M_n ^{(3)} = t, M_{n+1}^{(3)} < \eta A} & =  \frac{F(\log x -\log(\eta+t))}{F(\log A + \log \eta -\log(\eta+t))}
\\
& = \frac{F(y-s_\eta)}{F(a_\eta-s_\eta)}
\\
\Pb \brc{ M_{n+1}^{(2)} \le x | M_n ^{(2)} = t, M_{n+1}^{(2)} < \eta A} & =  \frac{F(\log x -\log(1+t))}{F(\log A + \log \eta -\log(1+t))}
\\
&= \frac{F(y-s_1)}{F(a_\eta-s_1)},
\end{aligned}
\]
where $y=\log x $, $s_\eta = \log (\eta+t)$ and $a_\eta=\log A + \log \eta$. Writing
\[
\Xi(s)= \frac{F(y-s)}{F(a_\eta-s)},
\]
we obtain that
\[
\begin{aligned}
\Pb \brc{ M_{n+1}^{(3)} \le x | M_n ^{(3)} = t, M_{n+1}^{(3)} < \eta A} & = \Xi(s_\eta) ,
\\
\Pb \brc{ M_{n+1}^{(2)} \le x | M_n ^{(2)} = t, M_{n+1}^{(2)} < \eta A} & = \Xi(s_1).
\end{aligned}
\]
Since (by the same consideration as in the proof of Lemma~\ref{Lem3}) $\Xi(s)$ is non-increasing function of  $s$, it follows that
\[
\Pb \brc{ M_{n+1}^{(3)} \le x | M_n ^{(3)} = t, M_{n+1}^{(3)} < \eta A} \le \Pb \brc{ M_{n+1}^{(2)} \le x | M_n ^{(2)} = t, M_{n+1}^{(2)} < \eta A}.
\]

Now let $0 \le s \le t < \eta A$. By Lemma~\ref{Lem3}, $M_{n}^{(3)}$ is stochastically monotone, so that
\begin{equation}\label{PrM2}
\Pb \brc{ M_{n+1}^{(2)} \le x | M_n ^{(2)} = s, M_{n+1}^{(2)} < \eta A} \ge \Pb \brc{ M_{n+1}^{(3)} \le x | M_n ^{(3)} = t, M_{n+1}^{(3)} < \eta A} .
\end{equation}

Construct a sample space where $M^{(2)}_0=M^{(3)}_0=0$. By \eqref{PrM2},  $M^{(3)}_1$ is stochastically larger than $M^{(2)}_1$. Hence,  one can construct the probability space so that also $M^{(3)}_1 \ge M_1^{(2)}$. Now, due to stochastic monotonicity of the transition probabilities, $M^{(3)}_2$ is stochastically larger than $M^{(2)}_2$, and one can construct the probability space so that also $M^{(3)}_2 \ge M^{(2)}_2$. Continuing this inductively, one obtains a sample space where $M^{(3)}_n \ge M_n^{(2)}$ for all $n \ge 0$. Under $\Pb_\infty$, $M^{(3)}_n$ and $M^{(2)}_n$ tend in distribution to the quasi-stationary distributions $\Qb_{\eta A}^{(3)}$ and $\Qb_{\eta A}^{(2)}$, respectively, so it follows that $\Qb_{\eta A}^{(3)} \le  \Qb_{\eta A}^{(2)}(x)$ for all $x$.

Finally, consider the process $\widetilde{M}_n^{(3)}$ governed by
\[
\Pb \brc{\widetilde{M}_{n+1}^{(3)} \le x | \widetilde{M}_n ^{(3)} = t} = \Pb (\Lambda_1 \le x/(\eta+t))
\]
and started at $\widetilde{M}_0^{(3)}\sim \Qb_{\eta A}^{(3)}$ and the process $\widetilde{M}_n^{(2)}$ governed by
\[
\Pb \brc{\widetilde{M}_{n+1}^{(2)} \le x | \widetilde{M}_n ^{(2)} = t} = \Pb (\Lambda_1 \le x/(1+t))
\]
and started at $\widetilde{M}_0^{(2)}\sim \Qb_{\eta A}^{(2)}$. In just the same way as above, we can construct a single probability space with $\widetilde{M}_n^{(3)} \ge \widetilde{M}_n^{(2)}$ for all $n \ge 0$. Therefore, the process $\widetilde{M}_n^{(3)}$ will exit above $\eta A$  no later than the process $\widetilde{M}_n^{(2)}$, and therefore the expected exit time of $\widetilde{M}_n^{(3)}$ will not exceed that of $\widetilde{M}_n^{(2)}$.
But these expectations are ARL's to false alarm if $X_i \sim f$ and $\ADD_\infty$  if $X_i \sim g$.
Furthermore, the ARL to false alarm and  the $\ADD_\infty$ of $\widetilde{M}_n^{(1)}$
(where $\widetilde{M}_n^{(1)}=\eta^{-1} \widetilde{M}_n^{(3)}$) are equal to those of  $\widetilde{M}_n^{(3)}$. Clearly, the first exit time of $\widetilde{M}_n^{(1)}$ from $[0, A)$ is nothing but the SRP stopping time $T_A^{\Qb_A}=\inf\{n\colon R_n^{\Qb_A} \ge A\}$.
Hence, it follows that both the average delay to detection $\Eb_0 T_{A}^{\Qb_{A}}$ ($=\ADD_\infty(T_A^r)$) and the ARL to false alarm $\Eb_\infty T_{A}^{\Qb_{A}}$ of
the SRP procedure are increasing functions of $A$, and the proof is complete.
\end{proof}


\end{document}